\newtheorem{theorem}{Theorem}[section]
\theoremstyle{plain}
\newtheorem{lemma}{Lemma}[section]
\newtheorem{proposition}{Proposition}[section]
\numberwithin{equation}{section}
\begin{document}
\title[Hessian Estimates]{Hessian estimates for special Lagrangian equations with critical and
supercritical phases in general dimensions}
\author{Dake WANG}
\author{Yu YUAN}
\address{Department of Mathematics, Box 354350\\
University of Washington\\
Seattle, WA 98195}
\email{dkpool@math.washington.edu, yuan@math.washington.edu}
\thanks{Both authors are partially supported by an NSF grant.}
\date{\today}

\begin{abstract}
We derive a priori interior Hessian estimates for special Lagrangian equation
with critical and supercritical phases in general higher dimensions. Our
unified approach leads to sharper estimates even for the previously known
three dimensional and convex solution cases.

\end{abstract}
\maketitle

\section{\bigskip Introduction}

In this paper, we complete a priori\emph{ interior} Hessian estimates for the
special Lagrangian equation%

\begin{equation}
\sum_{i=1}^{n}\arctan\lambda_{i}=\Theta\label{EsLag}%
\end{equation}
with \emph{critical} and \emph{supercritical} phases $\left\vert
\Theta\right\vert \geq\left(  n-2\right)  \pi/2$ in all dimensions $n\geq3,$
where $\lambda=\left(  \lambda_{1},\cdots,\lambda_{n}\right)  $ are the
eigenvalues of the Hessian $D^{2}u.$ For solutions to $(\ref{EsLag})$ with
$\left\vert \Theta\right\vert \geq\left(  n-2\right)  \pi/2$ in dimension two
and three, and also convex solutions to $(\ref{EsLag})$ in all dimensions,
Hessian estimates have been obtained in [WY2,3,4] and [CWY].

Equation (\ref{EsLag}) originates in the special Lagrangian geometry by
Harvey-Lawson [HL]. The Lagrangian graph $\left(  x,Du\left(  x\right)
\right)  \subset\mathbb{R}^{n}\times\mathbb{R}^{n}$ is called special when the
argument of the complex number $\left(  1+\sqrt{-1}\lambda_{1}\right)
\cdots\left(  1+\sqrt{-1}\lambda_{n}\right)  $ or the phase is constant
$\Theta,$ and it is special if and only if $\left(  x,Du\left(  x\right)
\right)  $ is a (volume minimizing) minimal surface in $\mathbb{R}^{n}%
\times\mathbb{R}^{n}$ [HL, Theorem 2.3, Proposition 2.17]. The phase $\left(
n-2\right)  \pi/2$ is called critical because the level set $\left\{  \left.
\lambda\in\mathbb{R}^{n}\right\vert \lambda\ \text{satisfying }(\ref{EsLag}%
)\right\}  $ is convex \emph{only} when $\left\vert \Theta\right\vert
\geq\left(  n-2\right)  \pi/2$ [Y2, Lemma 2.1]. The algebraic form of
(\ref{EsLag}) is%
\begin{equation}
\cos\Theta\sum_{1\leq2k+1\leq n}\left(  -1\right)  ^{k}\sigma_{2k+1}%
-\sin\Theta\sum_{0\leq2k\leq n}\left(  -1\right)  ^{k}\sigma_{2k}=0,
\label{EsLagA}%
\end{equation}
where $\sigma_{k}$s are the elementary symmetric functions of the Hessian
$D^{2}u.$

We state our main result in the following.

\begin{theorem}
Let $u$ be a smooth solution to (\ref{EsLag}) with $\left\vert \Theta
\right\vert \geq\left(  n-2\right)  \pi/2$ and $n\geq3$ on $B_{R}%
(0)\subset\mathbb{R}^{n}.$ Then we have
\[
|D^{2}u(0)|\leq C(n)\exp\left[  C(n)\max_{B_{R}(0)}|Du|^{2n-2}/R^{2n-2}%
\right]  ;
\]
and when $\left\vert \Theta\right\vert =\left(  n-2\right)  \pi/2,$ we also
have%
\[
|D^{2}u(0)|\leq C(n)\exp\left[  C(n)\max_{B_{R}(0)}|Du|^{2n-4}/R^{2n-4}%
\right]  .
\]

\end{theorem}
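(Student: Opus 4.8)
The plan is to work on the Lagrangian graph $M=\{(x,Du(x)):x\in B_{R}(0)\}\subset\mathbb{R}^{n}\times\mathbb{R}^{n}$, which by [HL, Theorem 2.3] is a minimal submanifold for the induced metric $g=I_{n}+(D^{2}u)^{2}$, with volume element $dv_{g}=V\,dx$, where $V=\sqrt{\det g}=\prod_{i=1}^{n}\sqrt{1+\lambda_{i}^{2}}\geq1$. Since $|\lambda_{i}|\leq\sqrt{1+\lambda_{i}^{2}}\leq V$, one has $|D^{2}u(0)|\leq V(0)=e^{b(0)}$, where $b:=\ln V=\sum_{i}\ln\sqrt{1+\lambda_{i}^{2}}\geq0$ is the slope function; so it suffices to prove the bound $b(0)\leq C(n)\,N^{2n-2}/R^{2n-2}$ (and $b(0)\leq C(n)\,N^{2n-4}/R^{2n-4}$ at the critical phase), where $N:=\max_{B_{R}(0)}|Du|$.

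The analytic heart is a Jacobi-type inequality for $b$ on $(M,g)$. Minimality makes the ambient coordinate functions $g$-harmonic on $M$, so $\Delta_{g}b=g^{ij}\partial_{i}\partial_{j}b$ carries no first-order term; differentiating the phase identity $\sum\arctan\lambda_{i}=\Theta$ and substituting expresses $\Delta_{g}b$ as a quadratic form in the third derivatives $u_{ijk}$. The hypothesis $|\Theta|\geq(n-2)\pi/2$ is exactly the convexity of the level set $\{\sum\arctan\lambda_{i}=\Theta\}$ ([Y2, Lemma 2.1]), and that convexity forces the form to be nonnegative; in fact one obtains
\[
\Delta_{g}b\ \geq\ c(n)\,|\nabla_{g}b|^{2}\qquad\text{on }M,
\]
so that $b$ — and with it every power $V^{p}=e^{pb}$, $p\geq0$ — is $g$-subharmonic. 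At the critical phase $|\Theta|=(n-2)\pi/2$ the top symmetric function $\sigma_{n}$ disappears from the algebraic form $(\ref{EsLagA})$ and two eigenvalue directions become slack; keeping track of their favorable contribution in the same computation strengthens the inequality, and this is what ultimately replaces the exponent $2n-2$ by $2n-4$.

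Next I would pass from the pointwise inequality to an $L^{\infty}$ estimate using the machinery of minimal submanifolds. Since $b\geq0$ is $g$-subharmonic on the minimal $M$ through $X_{0}=(0,Du(0))$, the Michael--Simon Sobolev inequality on $M$ (together with the monotonicity of the volume density of $M$ at $X_{0}$) yields, via a De Giorgi/Moser iteration whose energy step comes from testing the Jacobi inequality against $\eta^{2}$,
\[
b(X_{0})\ \leq\ \frac{C(n)}{R^{\,n}}\int_{M\cap B_{R/2}(X_{0})}b\ dv_{g}\ \leq\ \frac{C(n)}{R^{\,n}}\int_{B_{R}(0)}b\,V\,dx,
\]
where $M\cap B_{R/2}(X_{0})$ is compactly contained in $M$ because every point of $M$ over $\partial B_{R}(0)$ sits at extrinsic distance $\geq R$ from $X_{0}$.

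It remains — and here lies the crux, the only step that determines the precise exponent — to bound $\int_{B_{R}(0)}b\,V\,dx$ by $C(n)\,R^{\,2-n}N^{2n-2}$. One uses the supercritical condition to control the (possibly negative) low eigenvalues, writes $V$ in terms of the elementary symmetric functions $\sigma_{k}(D^{2}u)$, and then exploits that the Newton tensors $\sigma_{k}^{ij}=\partial\sigma_{k}/\partial u_{ij}$ are divergence free: $\int_{B_{r}(0)}\sigma_{k}=\tfrac{1}{k}\int_{\partial B_{r}(0)}\sigma_{k}^{ij}u_{j}\nu_{i}$. This produces a recursion in which the gradient bound $N$ and the radius $R$ enter, and running it over a finite chain of dyadically shrinking balls accumulates the power $2n-2$; at the critical phase $\sigma_{n}$ is absent from $(\ref{EsLagA})$ and the strengthened Jacobi inequality removes one step of the recursion, leaving $2n-4$. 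Combining the three ingredients gives $|D^{2}u(0)|\leq V(0)=e^{b(0)}\leq C(n)\exp[C(n)\,N^{2n-2}/R^{2n-2}]$, and likewise with $2n-4$ when $|\Theta|=(n-2)\pi/2$. The step I expect to be the main obstacle is exactly this last one: extracting from the divergence structure of $(\ref{EsLagA})$, in tandem with the minimal-surface iteration, the \emph{sharp} power of $N/R$ — in particular handling the low eigenvalues (where the convexity of the level set must be used again) and executing the critical-phase refinement rigorously.
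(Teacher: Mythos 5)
Your outline captures the overall geometry (work on the minimal Lagrangian graph, Jacobi inequality for a subharmonic slope function, Michael--Simon on the minimal surface, divergence structure of $\sigma_{k}$), but it diverges from the paper at two load-bearing points, and the second one is a genuine obstruction.

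First, you take $b=\ln V=\sum_{i}\ln\sqrt{1+\lambda_{i}^{2}}$, whereas the paper takes $b=\ln\sqrt{1+\lambda_{\max}^{2}}$. Your choice is convenient because it is smooth (avoiding the viscosity/potential/distributional detour the paper needs for the Lipschitz $\lambda_{\max}$), but the pointwise Jacobi inequality $\Delta_{g}\ln V\geq c(n)\,\lvert\nabla_{g}\ln V\rvert^{2}$ is asserted without proof and is \emph{not} what Lemma 2.3 establishes: that lemma proves the inequality for the averages $b_{m}$ at points where $\lambda_{1}=\cdots=\lambda_{m}>\lambda_{m+1}$, precisely so as to serve as smooth upper barriers for $\ln\sqrt{1+\lambda_{\max}^{2}}$; the case $m=n$ (which is where $b_{n}=\ln V/n$ would enter) only covers fully degenerate Hessians. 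You would need to carry out and verify a separate Jacobi computation for $\ln V$, especially with one negative eigenvalue allowed under the supercritical hypothesis, and it is far from clear it holds with a uniform $c(n)$.

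Second, and more seriously, the mean-value step you state, $b(X_{0})\leq C(n)R^{-n}\int_{M}b\,dv_{g}=C(n)R^{-n}\int b\,V\,dx$, misses the conformality identity that is the hinge of the whole argument. The paper does not go straight from Michael--Simon to $\int bV\,dx$; instead it applies the mean-value inequality to $b^{n/(n-2)}$, then the Sobolev inequality to the compactly supported $\varphi b^{1/2}$, then absorbs $\int\varphi^{2}\lvert\nabla_{g}b\rvert^{2}\,dv_{g}$ by the \emph{integral} Jacobi inequality, arriving at
\[
b(0)\ \leq\ C(n)\int_{B_{2}}b\,\lvert\nabla_{g}\varphi\rvert^{2}\,dv_{g}\ \leq\ C(n)\int_{B_{2}}b\,\sum_{i=1}^{n}\frac{1}{1+\lambda_{i}^{2}}\,\sqrt{\det g}\ dx.
\]
The trace weight $\sum_{i}(1+\lambda_{i}^{2})^{-1}$ is essential: by the conformality identity $\sum_{i}(1+\lambda_{i}^{2})^{-1}V=c_{0}+c_{1}\sigma_{1}+\cdots+c_{n-1}\sigma_{n-1}$, a polynomial of degree $n-1$, so the divergence iteration can run. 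Your target $\int bV\,dx$ has $V=\cos\Theta(\sigma_{0}-\sigma_{2}+\cdots)+\sin\Theta(\sigma_{1}-\sigma_{3}+\cdots)$, degree $n$ with top term $\sigma_{n}$. The paper's ``twist-multiplication'' estimate for $\lvert D_{i}b\rvert\,\partial_{\lambda_{i}}\sigma_{k}$ works only for $k\leq n-1$ (so that at least two indices $i,\alpha$ are absent from $\partial_{\lambda_{i}}\sigma_{k}$, giving the two factors $(1+\lambda_{i}^{2})^{-1/2}$ and $(1+\lambda_{\alpha}^{2})^{-1/2}$ needed to dominate by $g^{ii}$ and $g^{\alpha\alpha}$); it breaks for $\sigma_{n}$. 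So without the conformality step your iteration does not close, and even formally it would produce a power $\approx 2n$ rather than $2n-2$.

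Finally, your explanation of the critical-phase exponent $2n-4$ --- a strengthened Jacobi inequality from ``two slack directions'' --- is not what happens. The Jacobi constant in Proposition 2.1 is the same for all $\Theta\geq(n-2)\pi/2$. The improvement comes entirely from the conformality identity: at $\lvert\Theta\rvert=(n-2)\pi/2$ the trace polynomial degenerates to $2\sigma_{n-2}-4\sigma_{n-4}+\cdots$, starting two degrees lower, so the iteration in Step 4 begins at $\sigma_{n-2}$ instead of $\sigma_{n-1}$ and the final power drops from $2n-2$ to $2n-4$.
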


Relying on our previous gradient estimates for (\ref{EsLag}) with $\left\vert
\Theta\right\vert \geq\left(  n-2\right)  \pi/2$ in [WY4]%
\[
\max_{B_{R}(0)}|Du|\leq C\left(  n\right)  \left[  \operatorname*{osc}%
_{B_{2R}\left(  0\right)  }\frac{u}{R}+1\right]  ,
\]
we can bound $D^{2}u$ in terms of the solution $u$ in $B_{2R}\left(  0\right)
.$

Singular (viscosity) solutions to (\ref{EsLag}) with \emph{subcritical} phases
$\left\vert \Theta\right\vert <\left(  n-2\right)  \pi/2$ and $n\geq3$
constructed by Nadirashvili-Vl\u{a}du\c{t} [NV] and the authors [WdY] show
that the critical and supercritical phase condition in Theorem 1.1 is necessary.

One application of the above estimates is the regularity (analyticity) of the
$C^{0}$ viscosity solutions to (\ref{EsLag}) with $\left\vert \Theta
\right\vert \geq\left(  n-2\right)  \pi/2.$ In particular, the solutions of
the Dirichlet problem with continuous boundary data to (\ref{EsLag}) with
convex condition $\left\vert \Theta\right\vert \geq\left(  n-2\right)  \pi/2$
enjoy interior regularity. In contrast, the Hessian estimates, then the
interior regularity for solutions to (\ref{EsLag}) with $\left\vert
\Theta\right\vert =\left[  \frac{n-1}{2}\right]  \pi$ in [CNS] by
Caffarelli-Nirenberg-Spruck was derived under the $C^{4}$ smoothness
assumption on the boundary data.

Another quick consequence is a Liouville type result for global solutions with
quadratic growth to (\ref{EsLag}) with $\left\vert \Theta\right\vert =\left(
n-2\right)  \pi/2,$ namely any such a solution must be quadratic (cf. [Y1],
[Y2] where other Liouville type results for convex solutions to (\ref{EsLag})
and Bernstein type results for global solutions to (\ref{EsLag}) with
supercritical phase $\left\vert \Theta\right\vert >\left(  n-2\right)  \pi/2$
were obtained).

In the 1950's, Heinz [H] derived a Hessian bound for the two dimensional
Monge-Amp\`{e}re type equation including (\ref{EsLag}) with $n=2;$ see also
Pogorelov [P1] for Hessian estimates for these equations including
(\ref{EsLag}) with $\left\vert \Theta\right\vert >\pi/2\ $and $n=2.$ In the
1970's Pogorelov [P2] constructed his famous counterexamples, namely irregular
solutions to three dimensional Monge-Amp\`{e}re equations $\sigma_{3}%
(D^{2}u)=\det(D^{2}u)=1;$ those irregular solutions also serve as
counterexamples for cubic and higher order symmetric $\sigma_{k}$ equations
(cf. [U1]). In passing, we also mention Hessian estimates for solutions with
certain \emph{strict} convexity constraints to Monge-Amp\`{e}re equations and
$\sigma_{k}$ equation ($k\geq2$) by Pogorelov [P2] and Chou-Wang [CW]
respectively using the Pogorelov technique. Trudinger [T2] and Urbas [U2][U3],
also Bao-Chen [BC] obtained (pointwise) Hessian estimates in terms of certain
integrals of the Hessian, for $\sigma_{k}$ equations and special Lagrangian
equation (1.1) with $n=3,\ \Theta=\pi$ respectively. Pointwise Hessian
estimates for strictly convex solutions to quotient equations $\sigma
_{n}/\sigma_{k}$ were derived in terms of certain integrals of the Hessian by
Bao-Chen-Guan-Ji [BCGJ].

Our strategies for the Hessian estimates go as follows. We bound the
subharmonic function of the Hessian $b=\ln\sqrt{1+\lambda_{\max}^{2}}$ by its
integral on the minimal surface using Michael-Simon's mean value inequality
[MS]. Applying certain Sobolev inequalities, we estimate the integral of $b$
by the integral of its gradient. The decisive choice $b$ satisfies a Jacobi
inequality: its Laplacian bounds its gradient; in turn, the integral of the
gradient $b$ is bounded by a weighted volume of the minimal Lagrangian graph.
By a conformality identity, the weighted volume element is in fact the trace
of the linearized operator of the special Lagrangian equation in algebraic
form, which is a linear combination of the elementary symmetric functions of
the Hessian. Taking advantage of the divergence structure of those functions,
we bound the weighted volume in terms of the height of special Lagrangian
graph, or the gradient of the solution.

However, there are two major difficulties in the execution for general
dimension. The first one is to justify the nonlinear Jacobi inequality in the
integral sense for the Lipschitz only function $b,$ which was only achieved in
dimension three by involved arguments [WY2]. The second one is to find, in the
critical phase case, a relative isoperimetric inequality or equivalent Sobolev
inequality for functions without compact support, which was circumvented only
in dimension three thanks to the linear dependence on the Hessian for the
linearized operator of now equivalent equation $\sigma_{2}=1$ [WY2]. We
overcome the first one by observing that the Jacobi inequality and its
equivalent linear formulation hold in the viscosity sense, consequently in the
potential sense. By Herv\'{e}-Herv\'{e} [HH, Theorem 1] (see also Watson [Wn,
p. 246]), the linear inequality holds in the integral sense, in turn, so does
the needed Jacobi inequality. Conceptually it is natural this way. For
details, see the proof of Proposition 2.1. To deal with the second difficulty,
we instead apply the Sobolev inequality for functions with compact supports,
but use a \textquotedblleft twist-multiplication\textquotedblright\ trick to
contain the terms involving derivatives of the cut-off functions (Step 4 in
Section 3). This trick enables us to have a unified approach (for both the
critical and supercritical cases) in all dimensions $n\geq3.$ Even in the
known three dimensional [WY2,4] and convex cases [CWY], the simpler unified
argument leads to sharper Hessian estimates.

Our unified arguments does not work for (\ref{EsLag}) with $\Theta=0$ and
$n=2,$ as the Jacobi inequality fails (only) for harmonic functions.
Elementary methods in [WY3] led to the sharp Hessian estimates in dimension
two. (The sharp Hessian estimates in terms of the linear exponential
dependence on the gradients, can be seen by the corresponding solutions to the
Monge-Amp\`{e}re equation or (\ref{EsLag}) with $\Theta=\pi/2$ and $n=2,$
converted from Finn's minimal surface [F, p. 355] via Heinz transformation [J,
p. 133].)

As one can see that, not only our Hessian-slope estimates for
\textquotedblleft gradient\textquotedblright\ minimal graphs are analogous to
the gradient-slope estimates for the codimension one minimal graphs, but also
our arguments resemble the original integral proof by
Bombieri-De\ Giorgi-Miranda [BDM] and the simplified one by Trudinger [T1] for
the latter classical result. When one tries to adapt the later Korevaar
pointwise technique [K], certain extra structure or assumption has to be used,
as in [WY1]. Otherwise, an adaptation of the technique alone would lead to
Hessian estimates for the Monge-Amp\`{e}re equations, to which the Jacobi
inequality is available. But this is inconsistent with Pogorelov's singular
solutions [P2].

\textbf{Notation. }First\textbf{ }$\partial_{i}=\frac{\partial}{\partial
_{x_{i}}},\ \partial_{ij}=\frac{\partial^{2}}{\partial x_{i}\partial x_{j}%
},\ u_{i}=\partial_{i}u=D_{i}u,\ u_{ji}=\partial_{ij}u$ etc., but $\lambda
_{1},\cdots,\lambda_{n}$ and $b_{k}=\left(  \ln\sqrt{1+\lambda_{1}^{2}}%
+\cdots+\ln\sqrt{1+\lambda_{k}^{2}}\right)  /k\ $do not represent the partial
derivatives. Also%
\[
\sigma_{k}\left(  \lambda_{1},\cdots,\lambda_{n}\right)  =\sum_{1\leq
i_{1}<\cdots<i_{k}\leq n}\lambda_{i_{1}}\cdots\lambda_{i_{k}}.
\]
Further, $h_{ijk}$ will denote (the second fundamental form)
\[
h_{ijk}=\frac{1}{\sqrt{1+\lambda_{i}^{2}}}\frac{1}{\sqrt{1+\lambda_{j}^{2}}%
}\frac{1}{\sqrt{1+\lambda_{k}^{2}}}u_{ijk}.
\]
when $D^{2}u$ is diagonalized. Finally $C\left(  n\right)  $ will denote
various constants depending only on dimension $n.$

\section{Preliminary inequalities}

Taking the gradient of both sides of the special Lagrangian equation
(\ref{EsLag}), we have
\begin{equation}
\sum_{i,j=1}^{n}g^{ij}\partial_{ij}\left(  x,Du\left(  x\right)  \right)  =0,
\label{Emin}%
\end{equation}
where $\left(  g^{ij}\right)  $ is the inverse of the induced metric
$g=\left(  g_{ij}\right)  =I+D^{2}uD^{2}u$ on the surface $\left(  x,Du\left(
x\right)  \right)  \subset\mathbb{R}^{n}\times\mathbb{R}^{n}.$ Simple
geometric manipulation of (\ref{Emin}) yields the usual form of the minimal
surface equation
\[
\bigtriangleup_{g}\left(  x,Du\left(  x\right)  \right)  =0,
\]
where the Laplace-Beltrami operator of the metric $g$ is given by
\[
\bigtriangleup_{g}=\frac{1}{\sqrt{\det g}}\sum_{i,j=1}^{n}\partial_{i}\left(
\sqrt{\det g}g^{ij}\partial_{j}\right)  .
\]
Because we are using harmonic coordinates $\bigtriangleup_{g}x=0,$ we see that
$\bigtriangleup_{g}$ also equals the linearized operator of the special
Lagrangian equation (\ref{EsLag}) at $u,$%
\[
\bigtriangleup_{g}=\sum_{i,j=1}^{n}g^{ij}\partial_{ij}.
\]
The volume form, gradient and inner product with respect to the metric $g$
are
\begin{align*}
dv_{g}  &  =\sqrt{\det g}\ dx,\\
\nabla_{g}v  &  =\left(  \sum_{k=1}^{n}g^{1k}v_{k},\cdots,\sum_{k=1}^{n}%
g^{nk}v_{k}\right)  ,\\
\left\langle \nabla_{g}v,\nabla_{g}w\right\rangle _{g}  &  =\sum_{i,j=1}%
^{n}g^{ij}v_{i}w_{j},\ \ \text{in particular \ }\left\vert \nabla
_{g}v\right\vert ^{2}=\left\langle \nabla_{g}v,\nabla_{g}v\right\rangle _{g}.
\end{align*}
We begin with some algebraic and trigonometric inequalities needed in this paper.

\begin{lemma}
Suppose the ordered real numbers $\lambda_{1}\geq\lambda_{2}\geq\cdots
\geq\lambda_{n}$ satisfy (\ref{EsLag}) with $\Theta\geq\left(  n-2\right)
\pi/2$ and $n\geq2.$ Then we have%
\begin{gather}
\lambda_{1}\geq\lambda_{2}\geq\cdots\geq\lambda_{n-1}>0\ \ \text{and }%
\lambda_{n-1}\geq\left\vert \lambda_{n}\right\vert ,\label{AT-allbut1+}\\
\lambda_{1}+\left(  n-1\right)  \lambda_{n}\geq0,\label{AT-maxmin}\\
\sigma_{k}\left(  \lambda_{1},\cdots,\lambda_{n}\right)  \geq0\ \ \text{for
all }1\leq k\leq n-1. \label{AT-sigma}%
\end{gather}

\end{lemma}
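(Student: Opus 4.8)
The plan is to exploit the fact that the phase condition $\Theta \ge (n-2)\pi/2$ forces at most one of the arctangents to be negative, and then to convert the trigonometric inequalities into the algebraic ones via the formula that $\sigma_k$ agrees, up to sign, with the real or imaginary parts of $\prod (1 + \sqrt{-1}\,\lambda_j)$. First I would establish \eqref{AT-allbut1+}. Since $\arctan$ is increasing and bounded by $\pi/2$, if two of the $\lambda_i$ were nonpositive, say $\lambda_{n-1}, \lambda_n \le 0$, then $\arctan \lambda_{n-1} + \arctan\lambda_n \le 0$ while each of the remaining $n-2$ terms is strictly less than $\pi/2$, giving $\sum \arctan\lambda_i < (n-2)\pi/2 \le \Theta$, a contradiction. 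Hence $\lambda_1 \ge \cdots \ge \lambda_{n-1} > 0$ (strict positivity of $\lambda_{n-1}$ because if $\lambda_{n-1}=0$ the same count gives $<(n-2)\pi/2$ unless $\lambda_n \ge 0$ too, but then one reexamines the borderline and finds the sum still cannot reach the critical value unless forced — I would handle this borderline carefully, noting $\lambda_{n-1}\ge 0$ at minimum and then upgrading). For $\lambda_{n-1} \ge |\lambda_n|$: if $\lambda_n \ge 0$ this is immediate from ordering; if $\lambda_n < 0$, then from $\sum_{i=1}^{n-1}\arctan\lambda_i + \arctan\lambda_n \ge (n-2)\pi/2$ and $\sum_{i=1}^{n-2}\arctan\lambda_i < (n-2)\pi/2$ we get $\arctan\lambda_{n-1} + \arctan\lambda_n > 0$, i.e. $\arctan\lambda_{n-1} > \arctan(-\lambda_n) = \arctan|\lambda_n|$, so $\lambda_{n-1} > |\lambda_n|$.

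Next, \eqref{AT-maxmin}. If $\lambda_n \ge 0$ there is nothing to prove, so assume $\lambda_n < 0$. I would use the trigonometric inequality $\arctan a + \arctan b \ge \arctan\frac{a+b}{1-ab}$ type manipulation, or more directly: from $\arctan\lambda_1 < \pi/2$ and the $n-2$ middle terms each $< \pi/2$, plus the constraint, we get $\arctan\lambda_n \ge (n-2)\pi/2 - \sum_{i=1}^{n-1}\arctan\lambda_i > -\pi/2 - (n-2)(\pi/2 - \arctan\lambda_{n-1})$ after isolating; combined with $\arctan\lambda_1 \le \pi/2$ this should yield $\arctan\lambda_1 + (n-1)\arctan\lambda_n \ge 0$ in the arctan variables, but since $\arctan$ is concave on $[0,\infty)$ and odd, passing back to $\lambda$'s via $\arctan t \le t$ near relevant ranges needs care. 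A cleaner route: it suffices to show $\lambda_1 \ge (n-1)|\lambda_n| = (n-1)(-\lambda_n)$. Suppose not, $\lambda_1 < (n-1)(-\lambda_n)$; then all $\lambda_i \le \lambda_1 < (n-1)(-\lambda_n)$ for $i \le n-1$, and I would bound $\sum_{i=1}^{n-1}\arctan\lambda_i \le (n-1)\arctan\lambda_1$ and derive $\Theta \le (n-1)\arctan\lambda_1 + \arctan\lambda_n$; then show the right side is $< (n-2)\pi/2$ when $\lambda_1 < (n-1)(-\lambda_n)$ using monotonicity in $\lambda_n$ and the subadditivity $(n-1)\arctan s + \arctan(-s/(n-1)) < (n-2)\pi/2$ — this last one-variable inequality I would verify by differentiation in $s$, checking it at $s \to \infty$ where it becomes equality.

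Finally, \eqref{AT-sigma}. Here I would argue that $\sigma_k(\lambda) \ge 0$ for $1 \le k \le n-1$ follows from \eqref{AT-allbut1+} and \eqref{AT-maxmin}. Write $\lambda' = (\lambda_1, \dots, \lambda_{n-1})$, all positive, so $\sigma_j(\lambda') > 0$ for all $j$. Then $\sigma_k(\lambda) = \sigma_k(\lambda') + \lambda_n \sigma_{k-1}(\lambda')$. If $\lambda_n \ge 0$ this is manifestly nonnegative. If $\lambda_n < 0$, I need $\sigma_k(\lambda') \ge |\lambda_n|\,\sigma_{k-1}(\lambda')$, i.e. $\sigma_k(\lambda')/\sigma_{k-1}(\lambda') \ge |\lambda_n|$. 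By Maclaurin/Newton inequalities applied to the positive tuple $\lambda'$ of length $n-1$, the ratio $\sigma_k(\lambda')/\sigma_{k-1}(\lambda')$ is controlled below; the worst case $k = n-1$ gives $\sigma_{n-1}(\lambda')/\sigma_{n-2}(\lambda')$, which by the harmonic-mean bound is $\ge (n-1)/\sum_{i=1}^{n-1}\lambda_i^{-1} \ge \lambda_{n-1}/(n-1)\cdot(\text{something})$ — more simply, $\sigma_{n-1}(\lambda')/\sigma_{n-2}(\lambda') = 1/\sum \lambda_i^{-1} \cdot \sigma_{n-1} \cdot (\dots)$; the cleanest is to use that for positive $\lambda'$, $\sigma_k/\sigma_{k-1} \ge \frac{(n-k)}{k}\cdot\frac{\sigma_k}{\sigma_{k-1}}$... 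I would instead invoke directly: $\sigma_k(\lambda')\ge \binom{n-1}{k}\big(\sigma_{n-1}(\lambda')\big)^{k/(n-1)}$-type bounds are awkward, so the honest approach is $\lambda_1\sigma_{k-1}(\lambda'_{\hat 1}) $ decomposition plus \eqref{AT-maxmin} rewritten as $\lambda_1 \ge (n-1)|\lambda_n|$, giving $\sigma_k(\lambda') \ge \lambda_1 \sigma_{k-1}(\lambda_2,\dots,\lambda_{n-1}) \ge (n-1)|\lambda_n|\,\sigma_{k-1}(\lambda_2,\dots,\lambda_{n-1})$, and then I must compare $\sigma_{k-1}(\lambda_2,\dots,\lambda_{n-1})$ with $\sigma_{k-1}(\lambda_1,\dots,\lambda_{n-1})/(n-1)$, which holds since each of the $\binom{n-1}{k-1}$ monomials of the latter, when it does not involve $\lambda_1$, appears in the former, and those involving $\lambda_1$ number $\binom{n-2}{k-2} \le \binom{n-2}{k-1}\cdot(\text{ratio})$ — this combinatorial bookkeeping is the main obstacle and I expect to need the averaging identity $\sigma_{k-1}(\lambda') = \frac{1}{n-k}\sum_{i=1}^{n-1}\sigma_{k-1}(\lambda'_{\hat\imath})$ together with $\lambda_j \le \lambda_1$ to push it through. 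The overall expectation is that \eqref{AT-allbut1+} is easy, \eqref{AT-maxmin} requires one careful single-variable trigonometric estimate, and \eqref{AT-sigma} is the technical heart, reducing to Maclaurin-type inequalities on the positive $(n-1)$-tuple combined with \eqref{AT-maxmin}.
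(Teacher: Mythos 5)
Your treatments of (\ref{AT-allbut1+}) and (\ref{AT-maxmin}) are both sound. For (\ref{AT-allbut1+}) you use essentially the paper's argument. For (\ref{AT-maxmin}) you take a genuinely different route: you argue by contradiction and reduce to the single-variable inequality $(n-1)\arctan s + \arctan(-s/(n-1)) < (n-2)\pi/2$, verified by monotonicity in $s$. The paper instead applies $\tan$ to $\frac{\pi}{2}+\theta_n \geq \sum_{i<n}(\frac{\pi}{2}-\theta_i) \geq 0$ and uses superadditivity of $\tan$ on $[0,\pi/2)$ to get the reciprocal inequality $-1/\lambda_n \geq \sum_{i=1}^{n-1} 1/\lambda_i$. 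Both are correct, but the paper's cotangent trick is strictly more useful because the intermediate inequality $\sum_{i=1}^{n} 1/\lambda_i \leq 0$ is exactly what proves $\sigma_{n-1}\geq 0$ for free, whereas your route discards that information.

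The treatment of (\ref{AT-sigma}) has a genuine gap, and the difficulty you report in the combinatorial bookkeeping is not incidental: the plan cannot work. You propose to derive $\sigma_k\geq 0$ from (\ref{AT-allbut1+}) and (\ref{AT-maxmin}) alone, but these two algebraic conditions are strictly weaker than the trigonometric constraint and do not imply nonnegativity of all $\sigma_k$. Concretely, $(\lambda_1,\lambda_2,\lambda_3)=(2,1,-1)$ satisfies $\lambda_2>0$, $\lambda_2\geq|\lambda_3|$, and $\lambda_1+2\lambda_3=0\geq 0$, yet $\sigma_2 = 2 - 2 - 1 = -1 < 0$; of course $\arctan 2 + \arctan 1 - \arctan 1 < \pi/2$, so this tuple is not admissible, which is precisely why (\ref{AT-sigma}) needs the phase hypothesis again and not merely its two algebraic shadows. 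The intermediate inequality you need, $\sigma_{k-1}(\lambda_2,\dots,\lambda_{n-1}) \geq \sigma_{k-1}(\lambda_1,\dots,\lambda_{n-1})/(n-1)$, actually goes the wrong way: e.g.\ for $k=n-1$ it reduces to $\sum_{i=1}^{n-1}\lambda_1/\lambda_i \leq n-1$, which fails whenever the $\lambda_i$ are not all equal. The paper's proof of (\ref{AT-sigma}) re-invokes the trigonometric hypothesis in two ways you are missing: first, the cotangent inequality yields $\sum_{i=1}^{n} 1/\lambda_i \leq 0$, which immediately gives $\sigma_{n-1}\geq 0$; second, and crucially, removing $\lambda_1$ leaves an $(n-1)$-tuple whose phase sum is $\geq (n-3)\pi/2 = ((n-1)-2)\pi/2$, so the lemma's hypothesis holds one dimension down, enabling an induction on $n$ through the expansion $\sigma_k(\lambda_1,\dots,\lambda_n) = \sigma_k(\lambda_2,\dots,\lambda_n) + \lambda_1\sigma_{k-1}(\lambda_2,\dots,\lambda_n)$. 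That dimensional induction, which keeps the trigonometric constraint alive at every step, is the idea your proposal lacks.
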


\begin{proof}
Set $\theta_{i}=\arctan\lambda_{i}.$ Property (\ref{AT-allbut1+}) follows from
the inequalities
\[
\theta_{n-1}+\theta_{n}\geq\left(  n-2\right)  \pi/2-\left(  \theta_{1}%
+\cdots+\theta_{n-2}\right)  \geq0.
\]

We only need to check property (\ref{AT-maxmin}) when $\lambda_{n}<0$ or
$\theta_{n}<0.$ We know%
\[
\frac{\pi}{2}>\frac{\pi}{2}+\theta_{n}\geq\left(  \frac{\pi}{2}-\theta
_{1}\right)  +\cdots+\left(  \frac{\pi}{2}-\theta_{n-1}\right)  >0.
\]
It follows that%
\begin{align}
-\frac{1}{\lambda_{n}}  &  =\tan\left(  \frac{\pi}{2}+\theta_{n}\right)
\label{AT-reciprocal}\\
&  \geq\tan\left(  \frac{\pi}{2}-\theta_{1}\right)  +\cdots+\tan\left(
\frac{\pi}{2}-\theta_{n-1}\right)  =\frac{1}{\lambda_{1}}+\cdots+\frac
{1}{\lambda_{n-1}}\nonumber\\
&  \geq\left(  n-1\right)  \frac{1}{\lambda_{1}}.\nonumber
\end{align}
Then we get (\ref{AT-maxmin}).

Next we prove property (\ref{AT-sigma}) with $k=n-1.$ We only need to deal
with the case $\lambda_{n}<0.$ From (\ref{AT-reciprocal}), we have%
\[
0\geq\frac{1}{\lambda_{1}}+\cdots+\frac{1}{\lambda_{n-1}}+\frac{1}{\lambda
_{n}}=\frac{\sigma_{n-1}\left(  \lambda_{1},\cdots,\lambda_{n}\right)
}{\left(  \lambda_{1}\cdots\lambda_{n-1}\right)  \lambda_{n}}.
\]
Using $\lambda_{1}\geq\lambda_{2}\geq\cdots\geq\lambda_{n-1}>0>\lambda_{n},$
we get $\sigma_{n-1}\left(  \lambda_{1},\cdots,\lambda_{n}\right)  \geq0.$

Finally we prove the whole property (\ref{AT-sigma}) inductively. Property
(\ref{AT-sigma}) with $n=2$ is obvious (or by the above). Assume property
(\ref{AT-sigma}) with $n=m$ is true, that is%
\[
\sigma_{j}\left(  \lambda_{1},\cdots,\lambda_{m}\right)  \geq0\ \ \ \text{for
}1\leq j\leq m-1,\ \ \
\]
provided $\arctan\lambda_{1}+\cdots+\arctan\lambda_{m}\geq\left(  m-2\right)
\pi/2.$

Let us prove (\ref{AT-sigma}) with $n=m+1$ for
\begin{equation}
\arctan\lambda_{1}+\cdots+\arctan\lambda_{m+1}\geq\left(  m-1\right)  \pi/2.
\label{critical m+1}%
\end{equation}

By the proved property (\ref{AT-sigma}) with $k=n-1=m,$ we get $\sigma
_{m}\left(  \lambda_{1},\cdots,\lambda_{m+1}\right)  \geq0.$ We only need to
verify the other $\sigma$ inequalities when the smallest number is negative,
say $\lambda_{1}\geq\lambda_{2}\geq\cdots\geq\lambda_{m}>0>\lambda_{m+1}.$ (By
(\ref{AT-allbut1+}), only the smallest $\lambda_{m+1}$ can be negative.) We
have%
\[
\sigma_{m-1}\left(  \lambda_{1},\cdots,\lambda_{m+1}\right)  =\sigma
_{m-1}\left(  \lambda_{2},\cdots,\lambda_{m+1}\right)  +\lambda_{1}%
\sigma_{m-2}\left(  \lambda_{2},\cdots,\lambda_{m+1}\right)  .
\]
From (\ref{critical m+1}), we infer%
\[
\arctan\lambda_{2}+\cdots+\arctan\lambda_{m+1}\geq\left(  m-2\right)  \pi/2.
\]
By the induction assumption, we should have
\[
\sigma_{m-1}\left(  \lambda_{2},\cdots,\lambda_{m+1}\right)  \geq
0\ \text{and}\ \sigma_{m-2}\left(  \lambda_{2},\cdots,\lambda_{m+1}\right)
\geq0.
\]
Thus we obtain $\sigma_{m-1}\left(  \lambda_{1},\cdots,\lambda_{m+1}\right)
\geq0.$ Similarly we prove $\sigma_{i}\left(  \lambda_{1},\cdots,\lambda
_{m+1}\right)  \geq0$ for $1\leq i\leq m-2.$ Therefore property
(\ref{AT-sigma}) holds for all $n\geq2.$ This completes the proof of Lemma 2.1.
\end{proof}

\begin{lemma}
Let $u$ be a smooth solution to (\ref{EsLag}). Suppose that the Hessian
$D^{2}u$ is diagonalized and the eigenvalue $\lambda_{\gamma}$ is distinct
from all other eigenvalues of $D^{2}u$ at point $p.$ Then we have at $p$%
\begin{equation}
\left\vert \nabla_{g}\ln\sqrt{1+\lambda_{\gamma}^{2}}\right\vert ^{2}%
=\sum_{k=1}^{n}\lambda_{\gamma}^{2}h_{\gamma\gamma k}^{2} \label{gradientb1}%
\end{equation}
and%
\begin{align}
&  \bigtriangleup_{g}\ln\sqrt{1+\lambda_{\gamma}^{2}}\overset{}{=}%
\label{lapb1}\\
&  (1+\lambda_{\gamma}^{2})h_{\gamma\gamma\gamma}^{2}+\sum_{k\neq\gamma
}\left(  \frac{2\lambda_{\gamma}}{\lambda_{\gamma}-\lambda_{k}}+\frac
{2\lambda_{\gamma}^{2}\lambda_{k}}{\lambda_{\gamma}-\lambda_{k}}\right)
h_{kk\gamma}^{2}\nonumber\\
&  +\sum_{k\neq\gamma}\left[  1+\frac{2\lambda_{\gamma}}{\lambda_{\gamma
}-\lambda_{k}}+\frac{\lambda_{\gamma}^{2}\left(  \lambda_{\gamma}+\lambda
_{k}\right)  }{\lambda_{\gamma}-\lambda_{k}}\right]  h_{\gamma\gamma k}%
^{2}\nonumber\\
&  +\sum_{\substack{k>j\\k,\ j\neq\gamma}}2\lambda_{\gamma}\left[
\frac{1+\lambda_{k}^{2}}{\lambda_{\gamma}-\lambda_{k}}+\frac{1+\lambda_{j}%
^{2}}{\lambda_{\gamma}-\lambda_{j}}+(\lambda_{j}+\lambda_{k})\right]
h_{kj\gamma}^{2}.\nonumber
\end{align}

\end{lemma}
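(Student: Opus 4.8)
The plan is to regard $\lambda_{\gamma}=\lambda_{\gamma}(x)$ as a smooth function in a neighborhood of $p$: a simple eigenvalue stays isolated under small perturbations of $D^{2}u$, so both $\lambda_{\gamma}$ and a unit eigenvector $v_{\gamma}(x)$ (normalized so that $\left\vert v_{\gamma}\right\vert\equiv1$, hence $\partial_{e}v_{\gamma}\perp v_{\gamma}$, and $v_{\gamma}(p)=e_{\gamma}$) depend smoothly on $x$ near $p$. Then $\ln\sqrt{1+\lambda_{\gamma}^{2}}$ is smooth near $p$, and because $\bigtriangleup_{g}=\sum_{i,j}g^{ij}\partial_{ij}$ in these harmonic coordinates and $g$ is diagonal at $p$ with $g^{kk}=(1+\lambda_{k}^{2})^{-1}$, the chain rule
\[
\partial_{i}\ln\sqrt{1+\lambda_{\gamma}^{2}}=\frac{\lambda_{\gamma}}{1+\lambda_{\gamma}^{2}}\partial_{i}\lambda_{\gamma},\qquad\partial_{ij}\ln\sqrt{1+\lambda_{\gamma}^{2}}=\frac{1-\lambda_{\gamma}^{2}}{(1+\lambda_{\gamma}^{2})^{2}}\partial_{i}\lambda_{\gamma}\partial_{j}\lambda_{\gamma}+\frac{\lambda_{\gamma}}{1+\lambda_{\gamma}^{2}}\partial_{ij}\lambda_{\gamma}
\]
reduces both $(\ref{gradientb1})$ and $(\ref{lapb1})$, evaluated at $p$, to computing $\partial_{k}\lambda_{\gamma}$ and $\partial_{kk}\lambda_{\gamma}$ there and summing against $g^{kk}$.

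I would get these derivatives from elementary first- and second-order perturbation theory for the simple eigenvalue. Differentiating $D^{2}u\,v_{\gamma}=\lambda_{\gamma}v_{\gamma}$ once and pairing with $v_{\gamma}$, resp.\ with $e_{k}$ for $k\neq\gamma$, then evaluating at $p$ gives $\partial_{e}\lambda_{\gamma}=u_{\gamma\gamma e}$ and $\partial_{e}v_{\gamma}=\sum_{k\neq\gamma}\frac{u_{\gamma k e}}{\lambda_{\gamma}-\lambda_{k}}e_{k}$; differentiating $\partial_{e}\lambda_{\gamma}=\langle(\partial_{e}D^{2}u)v_{\gamma},v_{\gamma}\rangle$ once more and using the symmetry of the matrix $\partial_{e}D^{2}u=(u_{ije})_{ij}$ gives $\partial_{ee}\lambda_{\gamma}=u_{\gamma\gamma ee}+2\sum_{k\neq\gamma}\frac{u_{\gamma k e}^{2}}{\lambda_{\gamma}-\lambda_{k}}$ at $p$. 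Feeding $\partial_{k}\lambda_{\gamma}=u_{\gamma\gamma k}$ into the gradient expression and using $u_{\gamma\gamma k}^{2}=(1+\lambda_{\gamma}^{2})^{2}(1+\lambda_{k}^{2})\,h_{\gamma\gamma k}^{2}$ gives at once $\left\vert\nabla_{g}\ln\sqrt{1+\lambda_{\gamma}^{2}}\right\vert^{2}=\sum_{k}g^{kk}\bigl(\tfrac{\lambda_{\gamma}}{1+\lambda_{\gamma}^{2}}u_{\gamma\gamma k}\bigr)^{2}=\sum_{k}\lambda_{\gamma}^{2}h_{\gamma\gamma k}^{2}$, which is $(\ref{gradientb1})$.

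For $(\ref{lapb1})$ the only extra ingredient is the weighted sum $\sum_{k}g^{kk}u_{\gamma\gamma kk}$. From $(\ref{Emin})$ we already have $\sum_{i,j}g^{ij}u_{ij\gamma}=0$ identically near $p$; differentiating this in the $\gamma$-direction and evaluating at $p$, where $\partial_{\gamma}g^{ij}=-g^{ii}g^{jj}\partial_{\gamma}g_{ij}=-\frac{(\lambda_{i}+\lambda_{j})u_{ij\gamma}}{(1+\lambda_{i}^{2})(1+\lambda_{j}^{2})}$ (from $g_{ij}=\delta_{ij}+\sum_{m}u_{im}u_{jm}$ and the diagonalization at $p$), yields
\[
\sum_{k}g^{kk}u_{\gamma\gamma kk}=(1+\lambda_{\gamma}^{2})\sum_{i,j}(\lambda_{i}+\lambda_{j})h_{ij\gamma}^{2}=2(1+\lambda_{\gamma}^{2})\sum_{i,j}\lambda_{i}h_{ij\gamma}^{2}.
\]
Substituting $\partial_{kk}\lambda_{\gamma}$ and this identity into $\bigtriangleup_{g}\ln\sqrt{1+\lambda_{\gamma}^{2}}=\frac{1-\lambda_{\gamma}^{2}}{(1+\lambda_{\gamma}^{2})^{2}}\sum_{k}g^{kk}u_{\gamma\gamma k}^{2}+\frac{\lambda_{\gamma}}{1+\lambda_{\gamma}^{2}}\sum_{k}g^{kk}\partial_{kk}\lambda_{\gamma}$ and converting every $u_{ijk}$ to $h_{ijk}$ leaves the three sums $(1-\lambda_{\gamma}^{2})\sum_{k}h_{\gamma\gamma k}^{2}$, $\ 2\lambda_{\gamma}\sum_{i,j}\lambda_{i}h_{ij\gamma}^{2}$, and $\ 2\lambda_{\gamma}\sum_{k}\sum_{l\neq\gamma}\frac{(1+\lambda_{l}^{2})h_{\gamma lk}^{2}}{\lambda_{\gamma}-\lambda_{l}}$.

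The one genuinely fiddly step --- really the only obstacle, and a matter of careful bookkeeping rather than of ideas --- is the final reorganization of these three sums according to the index pattern of $h$: the types $h_{\gamma\gamma\gamma}$; $h_{kk\gamma}$ and $h_{\gamma\gamma k}$ with $k\neq\gamma$; and $h_{kj\gamma}$ with $k>j$ and $k,j\neq\gamma$. One must track the factor-of-two multiplicities produced by passing between ordered and unordered index pairs and then clear the common denominator $\lambda_{\gamma}-\lambda_{k}$. For instance, the $h_{\gamma\gamma\gamma}^{2}$ coefficient totals $(1-\lambda_{\gamma}^{2})+2\lambda_{\gamma}^{2}=1+\lambda_{\gamma}^{2}$, the $h_{kk\gamma}^{2}$ coefficient totals $2\lambda_{\gamma}\lambda_{k}+\frac{2\lambda_{\gamma}(1+\lambda_{k}^{2})}{\lambda_{\gamma}-\lambda_{k}}=\frac{2\lambda_{\gamma}}{\lambda_{\gamma}-\lambda_{k}}+\frac{2\lambda_{\gamma}^{2}\lambda_{k}}{\lambda_{\gamma}-\lambda_{k}}$, and the $h_{\gamma\gamma k}^{2}$ and $h_{kj\gamma}^{2}$ coefficients collapse in the same fashion to the bracketed expressions in $(\ref{lapb1})$, completing the proof.
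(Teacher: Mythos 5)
Your derivation is correct. The paper itself gives no proof here beyond citing Lemma~2.1 of [WY2]; what you have written out is the standard perturbation-theoretic calculation (first- and second-order eigenvalue perturbation for a simple eigenvalue, the chain rule for $\ln\sqrt{1+\lambda_{\gamma}^{2}}$, and the differentiated minimal surface equation to convert $\sum_{k}g^{kk}u_{\gamma\gamma kk}$ into second fundamental form terms), which is essentially the same computation carried out in that reference. I checked the bookkeeping for all four coefficient types ($h_{\gamma\gamma\gamma}^{2}$, $h_{kk\gamma}^{2}$, $h_{\gamma\gamma k}^{2}$, $h_{kj\gamma}^{2}$) and they collapse to the stated expressions, so the proposal is a complete and correct proof of the lemma.
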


\begin{proof}
The calculation was done in Lemma 2.1 of [WY2].
\end{proof}

\begin{lemma}
Let $u$ be a smooth solution to (\ref{EsLag}) with $\Theta\geq(n-2)\frac{\pi
}{2}.$ Suppose that the ordered eigenvalues $\lambda_{1}\geq\lambda_{2}%
\geq\cdots\geq\lambda_{n}$ of the Hessian $D^{2}u$ satisfy $\lambda_{1}%
=\cdots=\lambda_{m}>\lambda_{m+1}$ at point $p.$ Then the function
$b_{m}=\frac{1}{m}\sum_{i=1}^{m}\ln\sqrt{1+\lambda_{i}^{2}}$\ is smooth near
$p$ and satisfies at $p$%
\begin{equation}
\bigtriangleup_{g}b_{m}\geq\left(  1-\frac{4}{\sqrt{4n+1}+1}\right)
\left\vert \nabla_{g}b_{m}\right\vert ^{2}. \label{jacobi-bm}%
\end{equation}

\end{lemma}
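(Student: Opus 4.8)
The plan is to reduce the Jacobi inequality for $b_m$ to the single-eigenvalue formulas of Lemma 2.2, applied to the common eigenvalue $\lambda_1=\cdots=\lambda_m$, and then to control all the "bad" terms using the sign information from Lemma 2.1. The first observation is that near $p$, even though the individual $\lambda_i$'s for $i\le m$ are not smooth when they cross, the symmetric combination $b_m=\frac1m\sum_{i=1}^m\ln\sqrt{1+\lambda_i^2}$ is a smooth function of $D^2u$ (it is $\frac1m$ times the log of $\sigma_m$ of the matrix $I+(D^2u)^2$ restricted to an invariant subspace, or more robustly: $1+\lambda_i^2$ are eigenvalues of $g=I+(D^2u)^2$, and the top $m$ of them agree at $p$ with a spectral gap, so their product is a smooth symmetric function there). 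Hence $\nabla_g b_m$ and $\bigtriangleup_g b_m$ are classically defined at $p$.

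Next I would compute $\nabla_g b_m$ and $\bigtriangleup_g b_m$ at $p$. By perturbing $D^2u$ one can diagonalize in the $m$-dimensional eigenspace so that the perturbed eigenvalues $\lambda_1,\dots,\lambda_m$ near $\lambda_1$ are distinct; applying Lemma 2.2 to each $\gamma\in\{1,\dots,m\}$ and averaging gives, in the limit at $p$,
\[
\left\vert \nabla_g b_m\right\vert^2=\frac{1}{m^2}\sum_{\gamma=1}^m\sum_{k=1}^n\lambda_1^2 h_{\gamma\gamma k}^2
\]
(the cross terms $h_{\gamma\gamma'k}$ with $\gamma\neq\gamma'$ in $\langle\nabla_g\ln\sqrt{1+\lambda_\gamma^2},\nabla_g\ln\sqrt{1+\lambda_{\gamma'}^2}\rangle$ vanish because $\nabla_g\ln\sqrt{1+\lambda_\gamma^2}$ only involves $h_{\gamma\gamma\cdot}$), and a corresponding averaged expression for $\bigtriangleup_g b_m$ from $(\ref{lapb1})$. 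The key point is that when $\lambda_j=\lambda_k=\lambda_1$ for $j,k\le m$, the singular denominators $\lambda_\gamma-\lambda_k$ with $\gamma,k\le m$ cancel in the averaged sum; this is exactly the phenomenon already exploited in [WY2] for $b_2,b_3$, and one checks the limits are finite. What survives is a quadratic form in the $h_{ijk}$'s with one index forced into $\{1,\dots,m\}$.

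Then I would split the surviving terms according to how many indices lie in $\{1,\dots,m\}$ versus $\{m+1,\dots,n\}$, and bound $\bigtriangleup_g b_m$ from below by $c_n\left\vert\nabla_g b_m\right\vert^2$ with $c_n=1-\frac{4}{\sqrt{4n+1}+1}$. The terms with all three indices in $\{1,\dots,m\}$ contribute $(1+\lambda_1^2)$ times a sum of squares (the "$h_{\gamma\gamma\gamma}^2$"-type and the genuine third-order terms among the coincident eigenvalues), which is already $\ge\left\vert\nabla_g b_m\right\vert^2$-controllable. The delicate terms are those mixing a coincident index with the distinct eigenvalues $\lambda_k$, $k>m$: here one uses Lemma 2.1, namely $\lambda_1\ge\cdots\ge\lambda_{n-1}>0$, $\lambda_{n-1}\ge|\lambda_n|$, and the $\sigma_k\ge0$ bounds, together with $\lambda_\gamma=\lambda_1\ge\lambda_k$ so that $\lambda_1-\lambda_k\ge0$, to show the coefficients of $h_{\gamma\gamma k}^2$ and $h_{kk\gamma}^2$ are nonnegative up to a controlled loss. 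Balancing this loss against the positive $(1+\lambda_1^2)h_{\gamma\gamma\gamma}^2$ reserve by Cauchy–Schwarz (absorbing a fraction $\epsilon$ of $h_{\gamma\gamma\gamma}^2$ to dominate the cross terms $h_{\gamma\gamma k}h_{kk\gamma}$ or similar) produces the constant $1-\frac{4}{\sqrt{4n+1}+1}$; the specific algebraic value comes from optimizing this $\epsilon$, which I expect to be the main computational obstacle. The hard part is keeping track of all the mixed terms and verifying that the worst-case coefficient, after using $n\ge m$ and the eigenvalue inequalities, still leaves enough room for the stated (sharp) constant rather than a weaker one.
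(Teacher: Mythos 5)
Your overall strategy matches the paper's: sum the formulas of Lemma~2.2 over $\gamma\in\{1,\dots,m\}$, observe that the singular denominators $\lambda_\gamma-\lambda_k$ with $\gamma,k\le m$ cancel after the symmetric grouping, pass to the coincident-eigenvalue limit by $C^2$ continuity of $b_m$ as a matrix function, and then estimate the remaining quadratic form in the $h_{ijk}$'s using Lemma~2.1 together with a Cauchy--Schwarz optimization that produces the constant $1-4/(\sqrt{4n+1}+1)$. So you have the right outline. However, there are two gaps.

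First, your claimed gradient identity is wrong. Since the $k$-th component of $\nabla_g\ln\sqrt{1+\lambda_\gamma^2}$ is proportional to $\lambda_\gamma h_{\gamma\gamma k}$, the inner products $\langle\nabla_g\ln\sqrt{1+\lambda_\gamma^2},\nabla_g\ln\sqrt{1+\lambda_{\gamma'}^2}\rangle_g$ for $\gamma\ne\gamma'$ do \emph{not} vanish; they contribute cross terms $h_{\gamma\gamma k}h_{\gamma'\gamma'k}$. The correct identity at $p$ (with $\lambda=\lambda_1=\cdots=\lambda_m$) is
\[
\left\vert\nabla_g b_m\right\vert^2=\frac{1}{m^2}\sum_{k=1}^n\lambda^2\Bigl(\sum_{i\le m}h_{iik}\Bigr)^2,
\]
which is then bounded above (not equal to) by $\frac{\lambda^2}{m}\sum_k\sum_{i\le m}h_{iik}^2$ via Cauchy--Schwarz. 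Your sum-of-squares formula omits the square of the sum.

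Second, and more seriously, your sketch of the final estimate is missing the one ingredient without which it cannot close: the minimal surface equation~(\ref{Emin}) in diagonal form gives, for each fixed $k$, $\sum_{i=1}^n h_{iik}=0$. In the dangerous case $\lambda_n<0$ the expression for $\bigtriangleup_g b_m$ carries a genuinely negative coefficient $\frac{2\lambda_n}{\lambda-\lambda_n}$ on $h_{nnk}^2$ (for $k\le m$), and the only way to absorb it is to substitute $h_{nnk}=-\sum_{i<n}h_{iik}$ and apply Cauchy--Schwarz with weights $\frac{1}{1-\varepsilon}$, $\frac{1}{3-\varepsilon}$, and $\frac{\lambda-\lambda_i}{2\lambda_i}$. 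Your narrative of ``balancing against the $(1+\lambda_1^2)h_{\gamma\gamma\gamma}^2$ reserve'' is not a substitute for this; without the trace constraint $\sum_i h_{iik}=0$, that negative term has no counterweight. Once that substitution is made, the verification that the resulting bracket is nonnegative does use precisely the ingredients you name --- $\sigma_{n-1}\ge0$, $\sigma_n<0$, hence $\sigma_{n-1}/\sigma_n\le0$, together with the choice of $\varepsilon$ --- but the constraint is the engine of the argument and should be stated explicitly.
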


\begin{proof}
Step 1. The function $b_{m}$ is symmetric in $\lambda_{1},\cdots,\lambda_{m}.$
Thus\ for $m<n,$ $b_{m}$ is smooth when $\lambda_{m}>\lambda_{m+1},$ in
particular near $p,$ at which $\lambda_{1}=\cdots=\lambda_{m}>\lambda_{m+1}.$
For $m=n,$ $b_{n}$ is certainly smooth everywhere.

We again assume that Hessian $D^{2}u$ is diagonalized at point $p.$ Let us
also first assume the first $m$ eigenvalues $\lambda_{1,}\cdots,\lambda_{m}$
are distinct. Using (\ref{lapb1}) in Lemma 2.2, we calculate $\bigtriangleup
_{g}b_{m};$ after grouping those terms $h_{\heartsuit\heartsuit\heartsuit},$
$h_{\heartsuit\heartsuit\clubsuit}$ and $h_{\heartsuit\clubsuit\diamondsuit}$
in the summation, we obtain%
\begin{align*}
&  m\bigtriangleup_{g}b_{m}\overset{}{=}\sum_{\gamma=1}^{m}\bigtriangleup
_{g}\ln\sqrt{1+\lambda_{\gamma}^{2}}\overset{p}{=}\\
&
{\textstyle\sum\limits_{k\leq m}}
\left(  1+\lambda_{k}^{2}\right)  {\small h}_{kkk}^{2}{\small +}(%
{\textstyle\sum\limits_{i<k\leq m}}
+%
{\textstyle\sum\limits_{k<i\leq m}}
)\left(  3+\lambda_{i}^{2}+2\lambda_{i}\lambda_{k}\right)  {\small h}%
_{iik}^{2}{\small +}%
{\textstyle\sum\limits_{k\leq m<i}}
\frac{2\lambda_{k}\left(  1+\lambda_{k}\lambda_{i}\right)  }{\lambda
_{k}-\lambda_{i}}{\small h}_{iik}^{2}\\
&  +\sum_{i\leq m<k}\frac{3\lambda_{i}-\lambda_{k}+\lambda_{i}^{2}\left(
\lambda_{i}+\lambda_{k}\right)  }{\lambda_{i}-\lambda_{k}}h_{iik}^{2}+\\
&  \left\{
\begin{array}
[c]{l}%
2\sum_{i<j<k\leq m}\left(  3+\lambda_{i}\lambda_{j}+\lambda_{j}\lambda
_{k}+\lambda_{k}\lambda_{i}\right)  h_{ijk}^{2}+\\
2\sum_{i<j\leq m<k}\left(  1+\lambda_{i}\lambda_{j}+\lambda_{j}\lambda
_{k}+\lambda_{k}\lambda_{i}+\lambda_{i}\frac{1+\lambda_{k}^{2}}{\lambda
_{i}-\lambda_{k}}+\lambda_{j}\frac{1+\lambda_{k}^{2}}{\lambda_{j}-\lambda_{k}%
}\right)  h_{ijk}^{2}+\\
2\sum_{i\leq m<j<k}\lambda_{i}\left[  \lambda_{j}+\lambda_{k}+\frac
{1+\lambda_{j}^{2}}{\lambda_{i}-\lambda_{j}}+\frac{1+\lambda_{k}^{2}}%
{\lambda_{j}-\lambda_{k}}\right]  h_{ijk}^{2}%
\end{array}
\right.  .
\end{align*}

Now as a function of the matrices (then composed with smooth matrix function
$D^{2}u$ of $x$), $b_{m}$ is $C^{2}$ at $D^{2}u\left(  p\right)  $ with
eigenvalues satisfying $\lambda=\lambda_{1}=\cdots=\lambda_{m}>\lambda_{m+1}.$
Note that $D^{2}u\left(  p\right)  $ can be approximated by matrices with
distinct eigenvalues. Therefore the above expression for $\bigtriangleup
_{g}b_{m}$ at $p$ still holds and simplifies to%
\begin{align*}
&  m\bigtriangleup_{g}b_{m}\overset{p}{=}\\
&
{\textstyle\sum\limits_{k\leq m}}
\left(  1+\lambda^{2}\right)  h_{kkk}^{2}+(%
{\textstyle\sum\limits_{i<k\leq m}}
+%
{\textstyle\sum\limits_{k<i\leq m}}
)\left(  3+3\lambda^{2}\right)  h_{iik}^{2}+%
{\textstyle\sum\limits_{k\leq m<i}}
\frac{2\lambda\left(  1+\lambda\lambda_{i}\right)  }{\lambda-\lambda_{i}%
}h_{iik}^{2}+\\
&  \sum_{i\leq m<k}\frac{3\lambda-\lambda_{k}+\lambda^{2}\left(
\lambda+\lambda_{k}\right)  }{\lambda-\lambda_{k}}h_{iik}^{2}+\\
&  \left\{
\begin{array}
[c]{l}%
2\sum_{i<j<k\leq m}\left(  3+3\lambda^{2}\right)  h_{ijk}^{2}+\\
2\sum_{i<j\leq m<k}\left[  1+\frac{2\lambda}{\lambda-\lambda_{k}}%
+\frac{\lambda^{2}\left(  \lambda+\lambda_{k}\right)  }{\lambda-\lambda_{k}%
}\right]  h_{ijk}^{2}+\\
2\sum_{i\leq m<j<k}\lambda\left(  \lambda_{j}+\lambda_{k}+\frac{1+\lambda
_{j}^{2}}{\lambda-\lambda_{j}}+\frac{1+\lambda_{k}^{2}}{\lambda-\lambda_{k}%
}\right)  h_{ijk}^{2}%
\end{array}
\right. \\
&  \geq\sum_{k\leq m}\lambda^{2}h_{kkk}^{2}+(%
{\textstyle\sum_{i<k\leq m}}
+%
{\textstyle\sum_{k<i\leq m}}
)3\lambda^{2}h_{iik}^{2}+\sum_{k\leq m<i}\frac{2\lambda^{2}\lambda_{i}%
}{\lambda-\lambda_{i}}h_{iik}^{2}+\\
&  \sum_{i\leq m<k}\frac{\lambda^{2}\left(  \lambda+\lambda_{k}\right)
}{\lambda-\lambda_{k}}h_{iik}^{2},
\end{align*}
where we used (\ref{AT-allbut1+}) of Lemma 2.1 in the inequality.

Similarly by (\ref{gradientb1}) in Lemma 2.2 and the $C^{1}$ continuity of
$b_{m}$ as a function of matrices at $D^{2}u\left(  p\right)  ,$ we obtain
\[
\left\vert \nabla_{g}b_{m}\right\vert ^{2}\overset{p}{=}\frac{1}{m^{2}}%
\sum_{1\leq k\leq n}\lambda^{2}\left(  \sum_{i\leq m}h_{iik}\right)  ^{2}%
\leq\frac{\lambda^{2}}{m}\sum_{1\leq k\leq n}\left(  \sum_{i\leq m}h_{iik}%
^{2}\right)  .
\]

From the above two inequalities, it follows that%
\begin{gather}
m\left(  \bigtriangleup_{g}b_{m}-\varepsilon\left\vert \nabla_{g}%
b_{m}\right\vert ^{2}\right)  \geq\nonumber\\
\lambda^{2}\left[
\begin{array}
[c]{c}%
{\textstyle\sum_{k\leq m}}
\left(  1-\varepsilon\right)  h_{kkk}^{2}+(%
{\textstyle\sum_{i<k\leq m}}
+%
{\textstyle\sum_{k<i\leq m}}
)\left(  3-\varepsilon\right)  h_{iik}^{2}\\
+%
{\textstyle\sum_{k\leq m<i}}
\frac{2\lambda_{i}}{\lambda-\lambda_{i}}h_{iik}^{2}%
\end{array}
\right]  +\label{I}\\
\lambda^{2}\left[  \sum_{i\leq m<k}\left(  \frac{\lambda+\lambda_{k}}%
{\lambda-\lambda_{k}}-\varepsilon\right)  h_{iik}^{2}\right]  \label{II}%
\end{gather}
with $\varepsilon$ to be fixed.

Step 2. We show (\ref{I}) and (\ref{II}) in the above inequality are
nonnegative for $\varepsilon=1-4/\left(  \sqrt{4n+1}+1\right)  .$ For each
fixed $k$ in (\ref{I}) and (\ref{II}), set $t_{i}=h_{iik}.$ By the minimal
surface equation (\ref{Emin}), we have%
\begin{equation}
t_{1}+\cdots+t_{n}=0. \label{Emin-t}%
\end{equation}

Step 2.1. For each fixed $k\leq m,$ we prove the $\left[  \ \right]  _{k}$
term in (\ref{I}) is nonnegative. In the case with all $\lambda_{i}\geq0,$ the
nonnegativity is straightforward. In the remaining worst case $\lambda
_{n-1}>0>\lambda_{n}.$ Without loss of generality, we assume $k=1$ for simple
notation. Then we proceed as follows:%
\begin{gather*}
\left[  \ \right]  _{1}=\left\{  \left(  1-\varepsilon\right)  t_{1}^{2}%
+\sum_{i=2}^{m}\left(  3-\varepsilon\right)  t_{i}^{2}+\sum_{i=m+1}^{n-1}%
\frac{2\lambda_{i}}{\lambda-\lambda_{i}}t_{i}^{2}\right\}  +\frac{2\lambda
_{n}}{\lambda-\lambda_{n}}t_{n}^{2}\\
=\left\{  \left(  1-\varepsilon\right)  t_{1}^{2}+\sum_{i=2}^{m}\left(
3-\varepsilon\right)  t_{i}^{2}+\sum_{i=m+1}^{n-1}\frac{2\lambda_{i}}%
{\lambda-\lambda_{i}}t_{i}^{2}\right\}  +\frac{2\lambda_{n}}{\lambda
-\lambda_{n}}\left(  \sum_{i=1}^{n-1}t_{i}\right)  ^{2}\\
\geq\left\{  \left(  1-\varepsilon\right)  t_{1}^{2}+\sum_{i=2}^{m}\left(
3-\varepsilon\right)  t_{i}^{2}+\sum_{i=m+1}^{n-1}\frac{2\lambda_{i}}%
{\lambda-\lambda_{i}}t_{i}^{2}\right\}  \cdot\\
\left[  1+\frac{2\lambda_{n}}{\lambda-\lambda_{n}}\left(  \frac{1}%
{1-\varepsilon}+\sum_{i=2}^{m}\frac{1}{3-\varepsilon}+\sum_{i=m+1}^{n-1}%
\frac{\lambda-\lambda_{i}}{2\lambda_{i}}\right)  \right]  ,
\end{gather*}
where we used (\ref{Emin-t}) and a Cauchy-Schartz inequality to reach the
above inequality. We now show the second factor $\left[  \ \right]  $ in the
last term is also nonnegative:%
\begin{align*}
&  \left[  1+\frac{2\lambda_{n}}{\lambda-\lambda_{n}}\left(  \frac
{1}{1-\varepsilon}+\sum_{i=2}^{m}\frac{1}{3-\varepsilon}+\sum_{i=m+1}%
^{n-1}\frac{\lambda-\lambda_{i}}{2\lambda_{i}}\right)  \right] \\
&  =\frac{2\lambda_{n}}{\lambda-\lambda_{n}}\left(  \frac{\lambda-\lambda_{n}%
}{2\lambda_{n}}+\frac{1}{1-\varepsilon}+\frac{m-1}{3-\varepsilon}%
+\frac{\lambda-\lambda_{m+1}}{2\lambda_{m+1}}+\cdots+\frac{\lambda
-\lambda_{n-1}}{2\lambda_{n-1}}\right) \\
&  =\frac{2\lambda_{n}}{\lambda-\lambda_{n}}\left[  \frac{1}{1-\varepsilon
}+\frac{m-1}{3-\varepsilon}+\frac{\lambda}{2}\left(  \frac{1}{\lambda_{1}%
}+\cdots+\frac{1}{\lambda_{1}}\right)  -\frac{n}{2}\right] \\
&  =\frac{2\lambda_{n}}{\lambda-\lambda_{n}}\left[  \frac{1}{1-\varepsilon
}+\frac{m-1}{3-\varepsilon}+\frac{\lambda}{2}\frac{\sigma_{n-1}}{\sigma
n}-\frac{n}{2}\right] \\
&  \geq\frac{2\lambda_{n}}{\lambda-\lambda_{n}}\left(  \frac{1}{1-\varepsilon
}+\frac{m-1}{3-\varepsilon}-\frac{n}{2}\right) \\
&  \geq0,
\end{align*}
where we used $\lambda_{1}=\cdots=\lambda_{m},$ (\ref{AT-sigma}), and
$\frac{1}{1-\varepsilon}+\frac{m-1}{3-\varepsilon}-\frac{n}{2}\leq0$ under the
assumption
\[
\varepsilon\leq2-\frac{m}{n}-\sqrt{\left(  1-\frac{m}{n}\right)  ^{2}+\frac
{4}{n}}.
\]
Therefor $[\ ]_{1}\geq0.$

Step 2.2. For each $k$ between $m$ and $n,$ we have $\lambda_{k}>0,$ the
$\left[  \ \right]  _{k}$ term in (\ref{II}) satisfies%
\begin{align*}
\left[  \ \right]  _{k}  &  =\sum_{i\leq m}\left(  \frac{\lambda+\lambda_{k}%
}{\lambda-\lambda_{k}}-\varepsilon\right)  t_{i}^{2}\\
&  \geq\sum_{i\leq m}\left(  1-\varepsilon\right)  t_{i}^{2}\geq0,
\end{align*}
as long as $\varepsilon\leq1.$

For $k=n,$ the $\left[  \ \right]  _{n}$ term in (\ref{II}) becomes%
\begin{align*}
\left[  \ \right]  _{n}  &  =\sum_{i\leq m}\left(  \frac{\lambda+\lambda_{n}%
}{\lambda-\lambda_{n}}-\varepsilon\right)  t_{i}^{2}\\
&  \geq\sum_{i\leq m}\left(  \frac{n-2}{n}-\varepsilon\right)  t_{i}^{2}\geq0,
\end{align*}
where we used (\ref{AT-maxmin}) and we also assumed $\varepsilon\leq\frac
{n-2}{n}.$

Note that for $n-1\geq m\geq1$
\[
1-\frac{4}{\sqrt{4n+1}+1}\leq2-\frac{m}{n}-\sqrt{\left(  1-\frac{m}{n}\right)
^{2}+\frac{4}{n}}\leq\frac{n-2}{n},
\]
therefore we have proved (\ref{jacobi-bm}) with $n-1\geq m\geq1.$ When $m=n,$
we have $\lambda_{1}=\cdots=\lambda_{n}>0.$ Then from (\ref{I}) we see in a
much easier way that (\ref{jacobi-bm}) holds.

The proof of Lemma 2.3 is complete.
\end{proof}

\begin{proposition}
\label{PIJacobi}Let $u$ be a smooth solution to the special\ Lagrangian
equation (\ref{EsLag}) with $n\geq2$ and $\Theta\geq\left(  n-2\right)  \pi/2$
on $B_{R}\left(  0\right)  \subset\mathbb{R}^{n}.$ Set
\[
b=\ln\sqrt{1+\lambda_{\max}^{2}},
\]
where $\lambda_{\max}$ is the largest eigenvalue of Hessian $D^{2}u,$ namely,
$\lambda_{\max}=\lambda_{1}\geq\cdots\geq\lambda_{n}.$ Then $b$ satisfies the
integral Jacobi inequality
\begin{equation}
\int_{B_{R}}-\left\langle \nabla_{g}\varphi,\nabla_{g}b\right\rangle
_{g}dv_{g}\geq\varepsilon\left(  n\right)  \int_{B_{R}}\varphi\left\vert
\nabla_{g}b\right\vert ^{2}dv_{g} \label{IJacobi}%
\end{equation}
for all non-negative $\varphi\in C_{0}^{\infty}\left(  B_{R}\right)  ,$ where
$\varepsilon\left(  n\right)  =1-4/\left(  \sqrt{4n+1}+1\right)  .$
\end{proposition}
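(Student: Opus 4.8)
The plan is to reduce the nonlinear Jacobi inequality to a \emph{linear} one — the $g$-subharmonicity of a convex function of $b$ — then to establish that linear inequality in the viscosity sense, upgrade it to the distributional sense by a potential-theoretic result, and finally recover (\ref{IJacobi}) by integration by parts against a tailored test function. Since every $\varphi\in C_0^\infty(B_R)$ is supported in some $\overline{B_{R'}}\subset B_R$, on which $u$ is smooth and $(g^{ij})$ uniformly elliptic, all the analytic manipulations below are legitimate. For the linearization, put $\varepsilon=\varepsilon(n)=1-4/(\sqrt{4n+1}+1)$ and
\[
w=-\tfrac1\varepsilon e^{-\varepsilon b}=-\tfrac1\varepsilon\bigl(1+\lambda_{\max}^{2}\bigr)^{-\varepsilon/2},
\]
a Lipschitz function on $B_R$ because $\lambda_{\max}$ is a Lipschitz function of $D^2u$. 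The chain rule $\triangle_g\!\left(-\tfrac1\varepsilon e^{-\varepsilon v}\right)=e^{-\varepsilon v}\bigl(\triangle_g v-\varepsilon|\nabla_g v|^2\bigr)$ for $C^2$ functions $v$ shows that, wherever $b$ is smooth, $\triangle_g b\ge\varepsilon|\nabla_g b|^2$ is equivalent to $\triangle_g w\ge0$; so the target reduces to: $w$ is $\triangle_g$-subharmonic in the distributional sense on $B_R$.

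The main step is to verify that $w$ is a \emph{viscosity} subsolution of $\triangle_g w=0$ on $B_R$. Fix $p\in B_R$ and let $m$ be the multiplicity of $\lambda_{\max}$ there, i.e. $\lambda_1=\cdots=\lambda_m>\lambda_{m+1}$ at $p$ (with $m=n$ meaning $\lambda_1=\cdots=\lambda_n$). By Step 1 of Lemma 2.3, $b_m=\frac1m\sum_{i=1}^m\ln\sqrt{1+\lambda_i^2}$ is smooth near $p$, and since (\ref{AT-allbut1+}) of Lemma 2.1 gives $\lambda_1\ge|\lambda_i|$ at every point of $B_R$, one has $b\ge b_m$ on $B_R$ with equality at $p$. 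Hence $w_m:=-\tfrac1\varepsilon e^{-\varepsilon b_m}$ is smooth near $p$, satisfies $w\ge w_m$ near $p$ with equality at $p$, and by Lemma 2.3, $\triangle_g w_m(p)=e^{-\varepsilon b_m(p)}\bigl(\triangle_g b_m(p)-\varepsilon|\nabla_g b_m(p)|^2\bigr)\ge0$. Now if $\psi\in C^2$ touches $w$ from above at $p$, then $\psi\ge w\ge w_m$ near $p$ with $\psi(p)=w_m(p)$, so $\psi-w_m$ has a local minimum at $p$, hence $D^2\psi(p)\succeq D^2w_m(p)$; contracting with $(g^{ij}(p))\succ0$ yields $\triangle_g\psi(p)\ge\triangle_g w_m(p)\ge0$. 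This is exactly the viscosity subsolution property.

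It then remains to invoke the classical fact that a continuous viscosity subsolution of a linear second order elliptic equation with smooth coefficients is a subsolution in the potential sense, hence a distributional subsolution: $\int_{B_R}-\langle\nabla_g\psi,\nabla_g w\rangle_g\,dv_g\ge0$ for all nonnegative $\psi\in C_0^\infty(B_R)$, and by approximation also for all nonnegative Lipschitz $\psi$ with compact support. This is the point where I would cite Herv\'e--Herv\'e [HH, Theorem 1] (see also Watson [Wn, p.\ 246]). Finally, for $0\le\varphi\in C_0^\infty(B_R)$ take the admissible test function $\psi=e^{\varepsilon b}\varphi$ (nonnegative, Lipschitz, compactly supported): using $\nabla_g w=e^{-\varepsilon b}\nabla_g b$ and $\nabla_g\psi=e^{\varepsilon b}\bigl(\nabla_g\varphi+\varepsilon\varphi\nabla_g b\bigr)$ a.e., one finds $\langle\nabla_g\psi,\nabla_g w\rangle_g=\langle\nabla_g\varphi,\nabla_g b\rangle_g+\varepsilon\varphi|\nabla_g b|^2$ a.e., and integrating $\int_{B_R}-\langle\nabla_g\psi,\nabla_g w\rangle_g\,dv_g\ge0$ gives precisely (\ref{IJacobi}).

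I expect the viscosity step to be the main obstacle. The function $b$ is genuinely non-$C^2$ on the set where two eigenvalues of $D^2u$ collide, which may have positive measure, so one cannot argue "$\triangle_g w\ge0$ almost everywhere", and a priori there need be no $C^2$ function touching $b$ — or $w$ — from above at such a point. The idea that unblocks it is that $w_m$ furnishes a \emph{smooth lower barrier} touching $w$ at every point, which squeezes any upper test function and forces it to inherit the subsolution inequality; this exploits both the pointwise Jacobi inequality for $b_m$ (Lemma 2.3) and the ordering $\lambda_1\ge|\lambda_i|$ (Lemma 2.1) ensuring $b\ge b_m$. Once the viscosity subsolution property is in hand, the potential-theoretic upgrade and the final integration by parts are routine.
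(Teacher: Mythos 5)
Your proof is correct and follows essentially the same route as the paper: establish that the exponentiated function (your $w=-\tfrac1\varepsilon e^{-\varepsilon b}$, the paper's $a=e^{-\varepsilon b}$) is a viscosity sub/super-solution of $\triangle_g=0$ using the smooth comparison functions $b_m$ from Lemma~2.3 together with the ordering $\lambda_1\ge|\lambda_i|$ from Lemma~2.1, upgrade to the distributional inequality via Herv\'e--Herv\'e, and then test with $\varphi e^{\varepsilon b}$ to recover (\ref{IJacobi}). The only cosmetic difference is that you linearize before running the viscosity argument, whereas the paper first records the nonlinear viscosity inequality $\triangle_g b\ge\varepsilon|\nabla_g b|^2$ and only then passes to $a=e^{-\varepsilon b}$.
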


\begin{proof}
If $b\left(  x\right)  =b_{1}\left(  x\right)  $ is smooth everywhere, then
the pointwise Jacobi inequality (\ref{jacobi-bm}) in Lemma 2.3 with $m=1$
already implies the integral Jacobi inequality (\ref{IJacobi}).\ In general,
we know that $\lambda_{\max}$ is only a Lipschitz function of the entries of
the Hessian $D^{2}u.$ By the assumption, $D^{2}u\left(  x\right)  $ is smooth
in $x,$ thus $b=b_{1}=\ln\sqrt{1+\lambda_{\max}^{2}}$ is Lipschitz in terms of
$x.$

Set $\varepsilon=\varepsilon\left(  n\right)  .$ We first show that%
\[
\bigtriangleup_{g}b\geq\varepsilon\left\vert \nabla_{g}b\right\vert
^{2}\ \ \ \text{in the viscosity sense.}%
\]
Given any quadratic polynomial $Q$ touching $b$ from above at $p.$ If $p$ is a
smooth point of $b,$ by (\ref{jacobi-bm}) with $m=1,$ we get%
\[
\bigtriangleup_{g}Q\geq\varepsilon\left\vert \nabla_{g}Q\right\vert
^{2}\ \ \ \text{at }p.
\]
Otherwise, eigenvalue $\lambda_{1}$ is not distinct at $p.$ Suppose
$\lambda_{1}=\cdots=\lambda_{k}>\lambda_{k+1}$ at $p.$ Then $Q$ also touches
the smooth $b_{k}=\left(  \ln\sqrt{1+\lambda_{1}^{2}}+\cdots+\ln
\sqrt{1+\lambda_{k}^{2}}\right)  /k$ from above at $p,$ because%
\[
b\left(  x\right)  \geq b_{k}\left(  x\right)  \ \ \text{and }b\left(
p\right)  =b_{k}\left(  p\right)  .
\]
By pointwise Jacobi inequality (\ref{jacobi-bm}) with $m=k,$ we still have%
\[
\bigtriangleup_{g}Q\geq\varepsilon\left\vert \nabla_{g}Q\right\vert
^{2}\ \ \ \text{at }p.
\]

Next we switch to $a=e^{-\varepsilon b}$ and $a_{k}=e^{-\varepsilon b_{k}},$
the above argument leads to%
\[
\bigtriangleup_{g}a\leq0\ \text{in the viscosity sense.}%
\]
Relying on the definition of viscosity supersolutions, we see $a$ is
$\bigtriangleup_{g}$-superharmonic in the potential sense, namely, $a\geq h$
in any regular domain $\Omega$ for $\bigtriangleup_{g}$-harmonic function $h$
with the boundary value $a$ on $\partial\Omega:$%
\[
\left\{
\begin{array}
[c]{c}%
\bigtriangleup_{g}h=0\ \ \ \text{in }\Omega\\
h=a\ \ \ \ \ \text{on }\partial\Omega
\end{array}
\right.  .
\]
By [HH, Theorem 1] (see also [Wn, p. 246]), we obtain%
\[
\bigtriangleup_{g}a\leq0\ \text{in the distribution sense.}%
\]
Note $a$ is Lipschitz because $b$ is. We move to the integral Jacobi
inequality as follows. Take the test function $\varphi e^{\varepsilon b}$ for
and nonnegative $\varphi\in C_{0}^{\infty},$ we get%
\begin{align*}
0  &  \geq\int_{B_{R}}\varphi e^{\varepsilon b}\bigtriangleup_{g}%
a\ dv_{g}=\int_{B_{R}}-\left\langle \nabla_{g}\left(  \varphi e^{\varepsilon
b}\right)  ,\nabla_{g}a\right\rangle _{g}dv_{g}\\
&  =\int_{B_{R}}\ \left\langle e^{\varepsilon b}\left(  \nabla_{g}%
\varphi+\varepsilon\varphi\nabla_{g}b\right)  ,\varepsilon e^{-\varepsilon
b}\nabla_{g}b\right\rangle _{g}dv_{g}\\
&  =\int_{B_{R}}\left(  \varepsilon\left\langle \nabla_{g}\varphi,\nabla
_{g}b\right\rangle _{g}+\varepsilon^{2}\varphi\left\vert \nabla_{g}%
b\right\vert _{g}^{2}\right)  dv_{g}.
\end{align*}
Thus we arrive at the integral Jacobi inequality (\ref{IJacobi}).
\end{proof}

\ 

\section{Proof Of Theorem 1.1}

We assume that $R=2n+1$ and $u$ is a solution on $B_{2n+1}\subset
\mathbb{R}^{n}$ for simplicity of notation. By scaling $v\left(  x\right)
=u\left(  \frac{R}{2n+1}x\right)  /\left(  \frac{R}{2n+1}\right)  ^{2},$ we
still get the estimate in Theorem 1.1. We consider the case $\Theta\geq\left(
n-2\right)  \pi/2.$ The negative phase case $\Theta\leq-\left(  n-2\right)
\pi/2$ follows by symmetry.

Step 1. By the integral Jacobi inequality (\ref{IJacobi}) in Proposition
\ref{PIJacobi}, $b$\ is subharmonic in the integral sense. Then $b^{\frac
{n}{n-2}}$ is also subharmonic in the integral sense on the minimal surface
$\mathfrak{M}=\left(  x,Du\right)  :$%
\begin{align*}
&  \int-\left\langle \nabla_{g}\varphi,\nabla_{g}b^{\frac{n}{n-2}%
}\right\rangle _{g}dv_{g}\\
&  =\int-\left\langle \nabla_{g}\left(  \frac{n}{n-2}b^{\frac{2}{n-2}}%
\varphi\right)  -\frac{2n}{(n-2)^{2}}b^{\frac{4-n}{n-2}}\varphi\nabla
_{g}b,\nabla_{g}b\right\rangle _{g}dv_{g}\\
&  \geq\int\left(  \frac{n}{n-2}\varepsilon(n)\varphi b^{2}\left\vert
\nabla_{g}b\right\vert ^{2}+\frac{2n}{(n-2)^{2}}b^{\frac{4-n}{n-2}}%
\varphi\left\vert \nabla_{g}b\right\vert ^{2}\right)  dv_{g}\geq0
\end{align*}
for all non-negative $\varphi\in C_{0}^{\infty},$ where we approximate $b$ by
smooth functions if necessary.

Applying Michael-Simon's mean value inequality [MS, Theorem 3.4] to the
Lipschitz subharmonic function $b^{\frac{n}{n-2}},$ we obtain
\[
b\left(  0\right)  \leq C\left(  n\right)  \left(  \int_{\mathfrak{B}_{1}%
\cap\mathfrak{M}}b^{\frac{n}{n-2}}dv_{g}\right)  ^{\frac{n-2}{n}}\leq C\left(
n\right)  \left(  \int_{B_{1}}b^{\frac{n}{n-2}}dv_{g}\right)  ^{\frac{n-2}{n}%
},
\]
where $\mathfrak{B}_{r}$ is the ball with radius $r$ and center at $\left(
0,Du\left(  0\right)  \right)  $ in $\mathbb{R}^{n}\times\mathbb{R}^{n}$, and
$B_{r}$ is the ball with radius $r$ and center at $0$ in $\mathbb{R}^{n}.$
Choose a cut-off function $\varphi\in C_{0}^{\infty}\left(  B_{2}\right)  $
such that $\varphi\geq0,$ $\varphi=1$ on $B_{1},$ and $\left\vert
D\varphi\right\vert \leq1.1;$ we then have
\[
\left(  \int_{B_{1}}b^{\frac{n}{n-2}}dv_{g}\right)  ^{\frac{n-2}{n}}%
\leq\left(  \int_{B_{2}}\varphi^{\frac{2n}{n-2}}b^{\frac{n}{n-2}}%
dv_{g}\right)  ^{\frac{n-2}{n}}=\left(  \int_{B_{2}}\left(  \varphi
b^{1/2}\right)  ^{\frac{2n}{n-2}}dv_{g}\right)  ^{\frac{n-2}{n}}.
\]
Applying the Sobolev inequality on the minimal surface $\mathfrak{M}$ [MS,
Theorem 2.1] or [A, Theorem 7.3] to $\varphi b^{1/2},$ which we may assume to
be $C^{1}$ by approximation, we obtain
\[
\left(  \int_{B_{2}}\left(  \varphi b^{1/2}\right)  ^{\frac{2n}{n-2}}%
dv_{g}\right)  ^{\frac{n-2}{n}}\leq C\left(  n\right)  \int_{B_{2}}\left\vert
\nabla_{g}\left(  \varphi b^{1/2}\right)  \right\vert ^{2}dv_{g}.
\]
Decomposing the integrand as follows
\begin{align*}
\left\vert \nabla_{g}\left(  \varphi b^{1/2}\right)  \right\vert ^{2}  &
=\left\vert \frac{1}{2b^{1/2}}\varphi\nabla_{g}b+b^{1/2}\nabla_{g}%
\varphi\right\vert ^{2}\leq\frac{1}{2b}\varphi^{2}\left\vert \nabla
_{g}b\right\vert ^{2}+2b\left\vert \nabla_{g}\varphi\right\vert ^{2}\\
&  \leq\frac{1}{\ln\left(  4/3\right)  }\varphi^{2}\left\vert \nabla
_{g}b\right\vert ^{2}+2b\left\vert \nabla_{g}\varphi\right\vert ^{2},
\end{align*}
where we used
\[
b\geq\ln\sqrt{1+\tan^{2}\left(  \frac{\pi}{2}-\frac{\pi}{n}\right)  }\geq
\ln\sqrt{4/3},
\]
we get
\begin{align*}
b\left(  0\right)   &  \leq C\left(  n\right)  \int_{B_{2}}\left\vert
\nabla_{g}\left(  \varphi b^{1/2}\right)  \right\vert ^{2}dv_{g}\\
&  \leq C\left(  n\right)  \left(  \int_{B_{2}}\varphi^{2}\left\vert
\nabla_{g}b\right\vert ^{2}dv_{g}+\int_{B_{2}}b\left\vert \nabla_{g}%
\varphi\right\vert ^{2}dv_{g}\right)  .
\end{align*}

Step 2. By (\ref{IJacobi}) in Proposition \ref{PIJacobi}, $b$ satisfies the
Jacobi inequality in the integral sense:
\[
\frac{1}{\varepsilon\left(  n\right)  }\bigtriangleup_{g}b\geq\left\vert
\nabla_{g}b\right\vert ^{2}.
\]
Multiplying both sides by the above non-negative cut-off function $\varphi\in
C_{0}^{\infty}\left(  B_{2}\right)  ,$ then integrating, we obtain
\begin{align*}
\int_{B_{2}}\varphi^{2}\left\vert \nabla_{g}b\right\vert ^{2}dv_{g}  &
\leq\frac{1}{\varepsilon\left(  n\right)  }\int_{B_{2}}\varphi^{2}%
\bigtriangleup_{g}b\ dv_{g}\\
&  =\frac{-1}{\varepsilon\left(  n\right)  }\int_{B_{2}}\left\langle
2\varphi\nabla_{g}\varphi,\nabla_{g}b\right\rangle dv_{g}\\
&  \leq\frac{1}{2}\int_{B_{2}}\varphi^{2}\left\vert \nabla_{g}b\right\vert
^{2}dv_{g}+\frac{2}{\varepsilon\left(  n\right)  ^{2}}\int_{B_{2}}\left\vert
\nabla_{g}\varphi\right\vert ^{2}dv_{g}.
\end{align*}
It follows that
\[
\int_{B_{2}}\varphi^{2}\left\vert \nabla_{g}b\right\vert ^{2}dv_{g}\leq
\frac{4}{\varepsilon\left(  n\right)  ^{2}}\int_{B_{2}}\left\vert \nabla
_{g}\varphi\right\vert ^{2}dv_{g}.
\]
So far we have reached%
\begin{align}
b\left(  0\right)   &  \leq C\left(  n\right)  \left(  \int_{B_{2}}\left\vert
\nabla_{g}\varphi\right\vert ^{2}dv_{g}+\int_{B_{2}}b\left\vert \nabla
_{g}\varphi\right\vert ^{2}dv_{g}\right) \nonumber\\
&  \leq C\left(  n\right)  \int_{B_{2}}b\left\vert \nabla_{g}\varphi
\right\vert ^{2}dv_{g}\nonumber\\
&  \leq C\left(  n\right)  \int_{B_{2}}b\sum_{i=1}^{n}\frac{1}{1+\lambda
_{i}^{2}}\sqrt{\det g}\ dx, \label{conformal-b}%
\end{align}
where in the second inequality, we again used $b\geq\ln\sqrt{4/3}.$

Step 3. Differentiating the complex identity%
\begin{align*}
\ln V+\sqrt{-1}\sum_{i=1}^{n}\arctan\lambda_{i}  &  =\ln%
{\textstyle\prod\limits_{i=1}^{n}}
\left(  1+\sqrt{-1}\lambda_{i}\right) \\
&  =\ln\left[  \sum_{0\leq2k\leq n}\left(  -1\right)  ^{k}\sigma_{2k}%
+\sqrt{-1}\sum_{1\leq2k+1\leq n}\left(  -1\right)  ^{k}\sigma_{2k+1}\right]  .
\end{align*}
we obtain the (conformality) identity%
\[
\left(  \frac{1}{1+\lambda_{1}^{2}},\cdots,\frac{1}{1+\lambda_{n}^{2}}\right)
V=\left(  \frac{\partial\Sigma}{\partial\lambda_{1}},\cdots,\frac
{\partial\Sigma}{\partial\lambda_{n}}\right)
\]
with $V=\sqrt{\det g}$ and
\begin{align*}
\Sigma &  =\cos\Theta\sum_{1\leq2k+1\leq n}\left(  -1\right)  ^{k}%
\sigma_{2k+1}-\sin\Theta\sum_{0\leq2k\leq n}\left(  -1\right)  ^{k}\sigma
_{2k}\\
&  =\sigma_{n-1}-\sigma_{n-3}+\cdots,\ \text{in particular when }\left\vert
\Theta\right\vert =\left(  n-2\right)  \frac{\pi}{2}.
\end{align*}
Taking trace, we then get%
\begin{gather}
\sum_{i=1}^{n}\frac{1}{1+\lambda_{i}^{2}}V=\sum_{i=1}^{n}\frac{\partial\Sigma
}{\partial\lambda_{i}}\nonumber\\
=\cos\Theta\sum_{1\leq2k+1\leq n}\left(  -1\right)  ^{k}\left(  n-2k\right)
\sigma_{2k}-\sin\Theta\sum_{0\leq2k\leq n}\left(  -1\right)  ^{k}\left(
n-2k+1\right)  \sigma_{2k-1}\nonumber\\
=c_{0}+c_{1}\sigma_{1}+\cdots+c_{n-1}\sigma_{n-1}, \label{conformal-trace}%
\end{gather}
where the coefficient $c_{i}$ depends only on $i,n,$ and $\Theta.$ At the
critical phase $\left\vert \Theta\right\vert =\left(  n-2\right)  \pi/2,$ the
leading term in (\ref{conformal-trace}) is $\sigma_{n-2}$%
\begin{equation}
\sum_{i=1}^{n}\frac{1}{1+\lambda_{i}^{2}}V=2\sigma_{n-2}-4\sigma_{n-4}+\cdots.
\label{conformal-trace-critical}%
\end{equation}
In turn, (\ref{conformal-b}) becomes%
\begin{equation}
b\left(  0\right)  \leq C\left(  n\right)  \int_{B_{2}}b\left(  c_{0}%
+c_{1}\sigma_{1}+\cdots+c_{n-1}\sigma_{n-1}\right)  dx. \label{b-sigma}%
\end{equation}

Step 4. Next we estimate the integrals $\int b\sigma_{k}dx$ for $1\leq k\leq
n-1$ inductively, using the divergence structure of $\sigma_{k}(D^{2}u):$
\begin{align*}
k\sigma_{k}(D^{2}u)  &  =\sum_{i,j=1}^{n}\frac{\partial\sigma_{k}}{\partial
u_{ij}}\frac{\partial^{2}u}{\partial x_{i}\partial x_{j}}=\sum_{i,j=1}%
^{n}\frac{\partial}{\partial x_{i}}\left(  \frac{\partial\sigma_{k}}{\partial
u_{ij}}\frac{\partial u}{\partial x_{j}}\right) \\
&  =div\left(  L_{\sigma_{k}}Du\right)  ,
\end{align*}
where $L_{\sigma_{k}}$ denotes the matrix $\left(  \frac{\partial\sigma_{k}%
}{\partial u_{ij}}\right)  .$ Let $\psi$ be a smooth cut-off function on
$B_{\rho+1}$ such that $\psi=1$ on $B_{\rho},$ $0\leq\psi\leq1,$ and
$\left\vert D\psi\right\vert \leq1.1.$ Noticing that $\sigma_{k}>0$ by
(\ref{AT-sigma}) in Lemma 2.1 and $b>0,$ we have
\begin{gather}
\int_{B_{\rho}}b\sigma_{k}dx\leq\int_{B_{\rho+1}}\psi b\sigma_{k}%
dx=\int_{B_{\rho+1}}\psi b\frac{1}{k}div\left(  L_{\sigma_{k}}Du\right)
dx\nonumber\\
=\frac{1}{k}\int_{B_{\rho+1}}-\left\langle bD\psi+\psi Db,L_{\sigma_{k}%
}Du\right\rangle dx\nonumber\\
\leq C(n)\Vert Du\Vert_{L^{\infty}(B_{\rho+1})}\left[
\begin{array}
[c]{l}%
\int_{B_{\rho+1}}b\sigma_{k-1}dx+\\
\int_{B_{\rho+1}}\left[  \left\vert \nabla_{g}b\right\vert ^{2}+tr\left(
g^{ij}\right)  \right]  \sqrt{\det g}\ dx
\end{array}
\right]  . \label{Induction-bsigma}%
\end{gather}
The last inequality was derived as follows. As all the above integrands are
invariant under orthogonal transformations, at any point $p\in B_{\rho+1},$ we
assume $D^{2}u\left(  p\right)  $ is diagonalized. Then $L_{\sigma_{k}}$ is
also diagonal with positive entries $\partial_{\lambda_{i}}\sigma_{k}.$ The
positivity can be seen by applying Lemma 2.1 to all $\lambda_{1,}%
\cdots,\lambda_{n}$ but $\lambda_{i},$ whose corresponding phase is no less
than $\left(  n-3\right)  \pi/2.$ Thus $0<\partial_{\lambda_{i}}\sigma
_{k}<(n-k+1)\sigma_{k-1}.$ Now we have%
\begin{align*}
&  \left\vert \left\langle bD\psi+\psi Db,L_{\sigma_{k}}Du\right\rangle
\right\vert \overset{p}{\leq}\sum_{i=1}^{n}\left(  b\left\vert D_{i}%
\psi\right\vert +\psi\left\vert D_{i}b\right\vert \right)  \ \partial
_{\lambda_{i}}\sigma_{k}\ \left\vert D_{i}u\right\vert \\
&  \overset{p}{\leq}C\left(  n\right)  \left\vert Du\left(  p\right)
\right\vert \left(  b\sigma_{k-1}+\sum_{i=1}^{n}\left\vert D_{i}b\right\vert
\partial_{\lambda_{i}}\sigma_{k}\right)  .
\end{align*}
Recall $k\leq n-1,$ then $\partial_{\lambda_{i}}\sigma_{k}$ only consists of
multiples of at most $\left(  n-2\right)  $ eigenvalues without $\lambda_{i}.$
\textquotedblleft Twist\textquotedblright\ multiplying the two $g^{\heartsuit
\heartsuit}$ terms involving the missed $\lambda_{i}$ and the other
eigenvalue, we obtain%
\begin{align*}
&  \left\vert D_{i}b\right\vert \partial_{\lambda_{i}}\sigma_{k}\overset
{p}{\leq}\left\vert D_{i}b\right\vert \partial_{\lambda_{i}}\sigma_{k}\left(
\left\vert \lambda_{1}\right\vert ,\cdots,\left\vert \lambda_{n}\right\vert
\right) \\
&  \ \ \ \ \ \ \ \ \overset{p}{\leq}C\left(  n\right)  \sum_{\alpha\neq
i}\left(  \frac{\left\vert D_{i}b\right\vert ^{2}}{1+\lambda_{i}^{2}}+\frac
{1}{1+\lambda_{\alpha}^{2}}\right)  \sqrt{\left(  1+\lambda_{1}^{2}\right)
\cdots\left(  1+\lambda_{n}^{2}\right)  }.
\end{align*}
Summing up, we get%
\begin{align*}
&  \sum_{i=1}^{n}\left\vert D_{i}b\right\vert \partial_{\lambda_{i}}\sigma
_{k}\overset{p}{\leq}C\left(  n\right)  \sum_{i=1}^{n}\left(  g^{ii}\left\vert
D_{i}b\right\vert ^{2}+g^{ii}\right)  \sqrt{\det g}\\
&  \overset{p}{=}C\left(  n\right)  \left[  \left\vert \nabla_{g}b\right\vert
^{2}+tr\left(  g^{ij}\right)  \right]  \sqrt{\det g}.
\end{align*}
The inequality (\ref{Induction-bsigma}) has been established. To simplify the
last integral in (\ref{Induction-bsigma}), we repeat the integral Jacobi
argument in Step 2 to get%
\[
\int_{B_{\rho+1}}\left\vert \nabla_{g}b\right\vert ^{2}\sqrt{\det g}\ dx\leq
C\left(  n\right)  \int_{B_{\rho+2}}tr\left(  g^{ij}\right)  \sqrt{\det
g}\ dx.
\]
Hence (\ref{Induction-bsigma}) becomes the following inductive inequality%
\begin{equation}
\int_{B_{\rho}}b\sigma_{k}dx\leq C(n)\Vert Du\Vert_{L^{\infty}(B_{\rho+1}%
)}\left[  \int_{B_{\rho+1}}b\sigma_{k-1}dx+\int_{B_{\rho+2}}tr\left(
g^{ij}\right)  \sqrt{\det g}\ dx\right]  . \label{Induction-bsigma-final}%
\end{equation}

Step 4.1. We iterate (\ref{Induction-bsigma-final}) to derive%
\begin{gather*}
\int_{B_{2}}b\sigma_{k}dx\\
\leq C\left(  n\right)  \left\{
\begin{array}
[c]{l}%
\left\Vert Du\right\Vert _{L^{\infty}\left(  B_{2+k}\right)  }^{k}%
\int_{B_{2+k}}b\ dx+\\
\left[  \left\Vert Du\right\Vert _{L^{\infty}\left(  B_{2+k}\right)  }%
^{k}+\cdots+\left\Vert Du\right\Vert _{L^{\infty}\left(  B_{2+k}\right)
}\right]  \int_{B_{2+k+1}}tr\left(  g^{ij}\right)  \sqrt{\det g}\ dx
\end{array}
\right\} \\
\leq C\left(  n\right)  \left\{
\begin{array}
[c]{l}%
\left\Vert Du\right\Vert _{L^{\infty}\left(  B_{2+k}\right)  }^{k+1}+\\
\left[  \left\Vert Du\right\Vert _{L^{\infty}\left(  B_{2+k}\right)  }%
^{k}+\left\Vert Du\right\Vert _{L^{\infty}\left(  B_{2+k}\right)  }\right]
\int_{B_{2+k+1}}tr\left(  g^{ij}\right)  \sqrt{\det g}\ dx
\end{array}
\right\}  ,
\end{gather*}
where for the last inequality, we used Young's inequality and%
\[
\int_{B_{2+k}}b\ dx\leq C\left(  n\right)  \left\Vert Du\right\Vert
_{L^{\infty}\left(  B_{2+k}\right)  },
\]
which follows from%
\[
b=\ln\sqrt{1+\lambda_{\max}^{2}}<\lambda_{\max}<\lambda_{1}+\lambda_{2}%
+\cdots+\lambda_{n}=\bigtriangleup u
\]
by (\ref{AT-allbut1+}) in Lemma 2.1. Putting all the estimates for
$b\sigma_{k}$s in (\ref{b-sigma}) together, we get%
\begin{equation}
b\left(  0\right)  \leq C\left(  n\right)  \left\{
\begin{array}
[c]{l}%
\left\Vert Du\right\Vert _{L^{\infty}\left(  B_{n+1}\right)  }^{n}+\left\Vert
Du\right\Vert _{L^{\infty}\left(  B_{n+1}\right)  }+\\
\left[  \left\Vert Du\right\Vert _{L^{\infty}\left(  B_{n+1}\right)  }%
^{n-1}+\left\Vert Du\right\Vert _{L^{\infty}\left(  B_{n+1}\right)  }\right]
\int_{B_{n+2}}tr\left(  g^{ij}\right)  \sqrt{\det g}\ dx
\end{array}
\right\}  . \label{conformal for b}%
\end{equation}

Step 4.2. We bound the last integral in the above inequality. Relying on the
trace conformality identity (\ref{conformal-trace}), we derive%
\begin{align}
\int_{B_{n+2}}tr\left(  g^{ij}\right)  \sqrt{\det g}\ dx  &  =\int_{B_{n+2}%
}\left(  c_{0}+c_{1}\sigma_{1}+\cdots+c_{n-1}\sigma_{n-1}\right)
dx\label{conformal-trace-integral}\\
&  \leq C\left(  n\right)  \left[  \left\Vert Du\right\Vert _{L^{\infty
}\left(  B_{2n+1}\right)  }^{n-1}+1\right]  ,\nonumber
\end{align}
where for the last inequality, we repeated the iteration integral estimates
for (\ref{Induction-bsigma-final}) in Step 4.1 with $b=1$ (now much simpler)%
\[
\int_{B_{\rho}}\sigma_{k}dx\leq C\left(  n\right)  \left\Vert Du\right\Vert
_{L^{\infty}\left(  B_{\rho+1}\right)  }\int_{B_{\rho+1}}\sigma_{k-1}dx.
\]

Finally from the above estimates (\ref{conformal-trace-integral}) and
(\ref{conformal for b}), we conclude that%
\[
b\left(  0\right)  \leq C\left(  n\right)  \left[  \left\Vert Du\right\Vert
_{L^{\infty}\left(  B_{2n+1}\right)  }^{2n-2}+\left\Vert Du\right\Vert
_{L^{\infty}\left(  B_{2n+1}\right)  }\right]
\]
and after exponentiating%
\[
\left\vert D^{2}u\left(  0\right)  \right\vert \leq C\left(  n\right)
\exp\left[  C\left(  n\right)  \left\Vert Du\right\Vert _{L^{\infty}\left(
B_{2n+1}\right)  }^{2n-2}\right]  .
\]

Note at the critical phase $\Theta=\left(  n-2\right)  \pi/2,$ because of
(\ref{conformal-trace-critical}), the leading term in (\ref{b-sigma}) and
(\ref{conformal-trace-integral}) is $\sigma_{n-2}.$ The iteration integral
estimates in Step 4.1 and 4.2 start from $\sigma_{n-2}.$ Thus we really obtain%
\[
\left\vert D^{2}u\left(  0\right)  \right\vert \leq C\left(  n\right)
\exp\left[  C\left(  n\right)  \left\Vert Du\right\Vert _{L^{\infty}\left(
B_{2n}\right)  }^{2n-4}\right]  .
\]

The proof of Theorem 1.1 is complete.

\end{document}